\newtheorem{definition}{Definition}[section]
\newtheorem{theorem}[definition]{Theorem}
\newtheorem{proposition}[definition]{Proposition}
\newtheorem{lemma}[definition]{Lemma}
\newtheorem{corollary}[definition]{Corollary}
\newtheorem{remark}[definition]{Remark}
\newtheorem{claim}[definition]{Claim}
\title{On the transfer reducibility of certain Farrell--Hsiang groups}
\author{Christoph Winges}
\subjclass[2010]{18F25, 54H25, 55U10}
\keywords{Farrell--Jones Conjecture, transfer reducibility, Farrell--Hsiang method, resolution of fixed points, fixed-point free actions}
\address{Christoph Winges\\
	Westf\"alische Wilhelms-Universit\"at M\"unster,
		Mathematisches Institut\\
	Einsteinstr.~62 \\ 48149 M\"unster \\ Germany}
\email{christoph.winges@wwu.de}
\newcommand{\KK}{\mathbb{K}}
\newcommand{\NN}{\mathbb{N}}
\newcommand{\RR}{\mathbb{R}}
\newcommand{\ZZ}{\mathbb{Z}}
\newcommand{\cA}{\mathcal{A}}
\newcommand{\cC}{\mathcal{C}}
\newcommand{\cD}{\mathcal{D}}
\newcommand{\cE}{\mathcal{E}}
\newcommand{\cF}{\mathcal{F}}
\newcommand{\cP}{\mathcal{P}}
\newcommand{\cR}{\mathcal{R}}
\newcommand{\cS}{\mathcal{S}}
\newcommand{\cV}{\mathcal{V}}
\renewcommand{\epsilon}{\varepsilon}
\renewcommand{\theta}{\vartheta}
\renewcommand{\phi}{\varphi}
\newcommand{\abs}[1]{\lvert #1 \rvert}
\newcommand{\bound}{\mathsf{bd}}
\newcommand{\catscplx}{\mathsf{SCplx}}
\newcommand{\cyc}{\cC yc}
\newcommand{\depth}{d}
\newcommand{\dress}{\cD r}
\newcommand{\fhbdd}{Dress--Farrell--Hsiang group of bounded depth}
\newcommand{\gscplx}[1]{$#1$-simplicial complex}
\newcommand{\reso}{resolution}
\newcommand{\srd}{set of resolution data}
\newcommand{\transport}[2]{\mathsf{Tr}_{#1}(#2)}
\DeclareMathOperator{\id}{id}
\DeclareMathOperator{\rank}{rk}
\DeclareMathOperator{\vcd}{vcd}
\begin{document}

\begin{abstract}
 We show how the existing proof of the Farrell--Jones Conjecture for virtually poly-$\ZZ$-groups
 can be improved to rely only on the usual inheritance properties in combination
 with transfer reducibility as a sufficient criterion for the validity of the conjecture.
\end{abstract}

\maketitle

\section{Introduction}

The Farrell--Jones Conjecture predicts that a certain \emph{assembly map}
\begin{equation*}
 \alpha^G_{\cV\cC yc} \colon H^G_n(E_{\cV\cC yc}G;\KK^{-\infty}_\cA) \to K_n(\cA *_G G/G)
\end{equation*}
is an isomorphism for all discrete groups $G$ and small additive $G$-categories $\cA$;
there is also an $L$-theoretic version of the conjecture which replaces the non-connective
$K$-theory spectrum of \cite{MR802790} by Ranicki's ultimate lower $L$-groups (see e.g. \cite[\S\ 17]{MR1208729}, \cite[Sec.~4]{MR1341817}).
These conjectures have received a lot of attention since their introduction in \cite{MR1179537} due to their intimate relation
with other prominent conjectures such as the Borel and Novikov Conjectures.
See \cite{MR2181833} for a survey.

While the conjectures are still wide open in general, substantial progress has been made
on the question in which special cases the conjectures hold. Among the most notable
classes of examples, one finds hyperbolic \cite{MR2385666} and ${\rm CAT}(0)$-groups \cite{MR2993750, MR2869063},
virtually poly-$\ZZ$-groups \cite{MR3164984}, lattices in virtually connected Lie groups \cite{MR3164984, KLR-FJCForLatticesInLieGroups},
a large number of linear groups \cite{MR3210177}, and solvable groups \cite{W-FJCSolvableGroups}.

Normally, the proofs can be broken down into several steps. Starting from the most general case,
one uses certain inheritance properties of the conjectures to reduce the proof to simpler instances.
These are then dealt with by proving that the groups under consideration satisfy the
assumptions of an abstract criterion which has been independently shown to imply the conjecture.

These criteria include two prototypical examples. First, there is the notion of
\emph{transfer reducibility} which was used to prove the case of hyperbolic groups
and then generalised to also cover ${\rm CAT}(0)$-groups. Second, the property of
\emph{being a Farrell--Hsiang group} (an abstraction of the arguments employed by
Farrell and Hsiang in \cite{MR0482771, MR631942, MR704219} and further exploited by Quinn \cite{MR2826431})
was considered to obtain proofs for virtually poly-$\ZZ$-groups.

The goal of the present article is to show that the $K$- and $L$-theoretic
Farrell--Jones Conjectures for virtually poly-$\ZZ$-groups \cite[Thm.~1.1]{MR3164984} can be
deduced relying entirely on transfer reducibility as a sufficient criterion, bypassing
any use of the Farrell--Hsiang method (i.e., the results of \cite{MR2957625}).
This illustrates that being transfer reducible is not a concept inherently
concerned with non-positive curvature conditions, as the original examples
of transfer reducible groups, namely hyperbolic and ${\rm CAT}(0)$-groups, might suggest.

As a consequence of our results, the proofs of the Farrell--Jones Conjecture for
lattices in virtually connected Lie groups \cite[Thm.~1.2]{MR3164984}, \cite{KLR-FJCForLatticesInLieGroups}
also become independent of the Farrell--Hsiang method (e.g. the proof
of Theorem 1.2 in \cite{MR3164984} only requires the validity of the conjecture for
${\rm CAT}(0)$-groups as additional input).

It should be pointed out that the results of this article cannot be considered a simplification
of the existing proofs, as the arguments that go into the verification of
the Farrell--Hsiang condition still have to be employed. However, it does
serve the purpose of unifying the existing proofs; the results of \cite{MR2957625}
are not needed anymore. 

Virtually cyclic groups form a notable exception to the slogan that
``Farrell--Hsiang groups are transfer reducible''.
The reduction to the family of (possibly infinite) hyperelementary groups
presented in \cite[Prop.~3.1.1]{MR2826431} and \cite[Sec.~8]{MR3164984}
cannot be obtained with the methods of this article.

We proceed as follows: In \S \ref{sec_resolutions} and \S \ref{sec_dfh},
we formulate a strengthening of the Farrell--Hsiang condition and prove
that all groups which satisfy this stronger condition are transfer reducible
in a very strict sense. Once this has been done, we give a quick review of 
the structure of the proof of \cite[Thm.~1.1]{MR3164984} in \S \ref{sec_overview}.
This serves the purpose of singling out all instances of the Farrell--Hsiang condition
that appear in the proof. 
In the remaining sections \S \ref{sec_crystal} and \S \ref{sec_affine},
we will present proofs that the classes of groups isolated in \S \ref{sec_overview}
all satisfy our stronger version of the Farrell--Hsiang condition.
The appendix reviews a theorem of Oliver \cite{MR0375361} concerning fixed-point free actions of finite groups
on finite, contractible complexes which is required for the discussion in \S \ref{sec_dfh}.

The author expects that the results proved in this article will also have
applications in the algebraic $K$-theory of spaces; these will be presented elsewhere.

\textbf{Acknowledgements.}
This article contains parts of the author's PhD thesis written under the supervision
of Arthur Bartels, whose support and advice are gratefully acknowledged.
Daniel Kasprowski provided helpful comments on an earlier version of this article.
The author was financially supported by CRC 878 {\it Groups, Geometry and Actions} of the German Research Foundation (DFG).

\section{Resolving fixed points of group actions on simplicial complexes}\label{sec_resolutions}

Our strategy is not to come up with entirely new proofs whenever we wish
to replace an invocation of the Farrell--Hsiang method.
Instead, we will relate (a variant of) the Farrell--Hsiang condition
to the notion of transfer reducibility, and then improve the existing verifications of the
Farrell--Hsiang condition.

The major difference between the proofs of the Farrell--Jones Conjecture in \cite{MR2385666} and \cite{MR2957625}
lies in the construction of the transfer maps. While the proof in \cite{MR2385666} exploits the existence
of a compact transfer space, the Farrell--Hsiang method relies on a discrete $G$-set
for the transfer and uses an algebraic result due to Swan \cite[Cor.~4.2(c) \& Prop.~1.1]{MR0138688} as additional input.

In this section, we prove a result on the space level which is analogous to
Swan's induction theorem. This will enable us to produce appropriate transfer spaces.

The objects of interest are \gscplx{G}s; these are (abstract) simplicial complexes $X$
equipped with a $G$-action such that whenever a group element $g$ fixes a simplex $x = \{ x_0, \dots, x_n \}$
in $X$, then $g x_i = x_i$ for all $0 \leq i \leq n$. Note that for such a complex,
the entire group action is encoded in the action of $G$ on the set of $0$-simplices $X_0$.
The geometric realisation of a \gscplx{G}\ is a $G$-CW-complex.

Recall that the \emph{transport groupoid} $\transport{G}{T}$ of a $G$-set $T$
is the groupoid whose objects are the elements of $T$ and whose morphisms $g \colon t \to t'$ are group elements 
$g \in G$ such that $gt = t'$.

\begin{definition}\label{def_SetOfResoultionData}
 Let $X$ be a \gscplx{G}. A \emph{\srd} $\cR$ for $X$ is a functor
 \begin{equation*}
  \cR \colon \transport{G}{X_0} \to \catscplx
 \end{equation*}
 to the category of simplicial complexes with the following properties:
 \begin{itemize}
  \item If $g \colon x \to x$ is an endomorphism in $\transport{G}{X_0}$
   and $\cR(g)(y) = y$ for some simplex $y \in \cR(x)$, then $\cR(g)$ fixes $y$ pointwise.
  \item The vertex sets $\{ \cR(x)_0 \}_{x \in X_0}$ are pairwise disjoint.
 \end{itemize}
\end{definition}

\begin{remark}\label{rem_SetOfResolutionData}
 Giving a \srd\ is a ``coordinate-free'' way of specifying for each vertex $x$ of $X$
 a \gscplx{G_x} $\cR(x)$ together with isomorphisms $\cR(x) \cong \cR(x')$ for vertices
 $x$ and $x'$ lying in the same $G$-orbit.
 
 To be more precise, if we pick some $G$-orbit $\overline{x}$ in $X_0$ and fix
 a representative $x_0 \in \overline{x}$, we can define an isomorphism
 \begin{equation*}
  \coprod_{x \in \overline{x}} \cR(x) \xrightarrow{\cong} G \times_{G_{x_0}} \cR(x_0)
 \end{equation*}
 by sending $y \in \cR(x)$ to $(g,\cR(g^{-1})y)$, where $g$ is given by the condition that $gx_0 = x$.
 This isomorphism becomes $G$-equivariant if we equip the domain with the $G$-action given by
 $g \cdot y := \cR(g)(y)$.
\end{remark}

\begin{definition}\label{def_resolution}
 Let $X$ be a \gscplx{G} and $\cR$ a \srd\ on $X$. We define a simplicial complex
 $X[\cR]$ as follows: The set of vertices is given by $\coprod_{x \in X_0} \cR(x)_0$.
 A set $y = \{ y_0,\dots,y_n \}$ spans an $n$-simplex in $X[\cR]$ if the following holds:
 \begin{itemize}
  \item For every $x \in X_0$, the set $S_x(y)$ which contains those elements of $y$ which are vertices of $\cR(x)$ is a simplex in $\cR(x)$.
  \item The set $S(y) := \{ x \in X_0 \mid S_x(y) \neq \emptyset \}$ is a simplex in $X$.
 \end{itemize}
 The group $G$ acts on $X[\cR]$ by
 \begin{equation*}
  g \cdot \{y_0,\dots,y_n\} := \{ \cR(g)(y_0),\dots,\cR(g)(y_n) \}.
 \end{equation*}
 We call $X[\cR]$ the \emph{\reso\ of $X$ by $\cR$}.
\end{definition}

It is easy to check that $X[\cR]$ is indeed a \gscplx{G}.
In particular, a subgroup of $G$ can only appear as a stabiliser of $X[\cR]$
if it is a stabiliser group of some $\cR(x)$.
Let us also observe that any simplex $y \in X[\cR]$ can be partitioned into
\begin{equation*}
 y = \coprod_{x \in S(y)} S_x(y).
\end{equation*}
If $X$ is $n$-dimensional and there is some $k$ such that the dimension of $\cR(x)$ is at most $k$ for all $x$,
it follows that the dimension of $X[\cR]$ can be bounded by $(n + 1)(k + 1) - 1 = nk + n + k$.

If $\tau$ is a natural transformation of sets of resolution data $\cR \to \cR'$,
there is an induced $G$-equivariant simplicial map $X[\tau] \colon X[\cR] \to X[\cR']$
which sends $y \in \cR(x)_0$ to $\tau_x(y)$.

\begin{remark}\label{rem_AlternativeRealisation}
 Let $X$ be a \gscplx{G} and let $\cR$ be a \srd\ for $X$. Define $\abs{X,\cR}$ as the set of formal finite convex combinations
 \begin{equation*}
  \abs{X,R} := \left\{ \sum_{x \in X_0} \lambda_x \cdot \eta_x \mid \lambda_x \geq 0, \{ x \mid \lambda_x \neq 0 \} \in X, \sum_{x \in X_0} \lambda_x = 1, \eta_x \in \abs{\cR(x)} \right\},
 \end{equation*}
 Define
 \begin{equation*}
  F \colon \abs{X[\cR]} \to \abs{X,\cR}, \quad \sum_{y \in X[\cR]_0} \lambda_y \cdot y \mapsto \sum_{x \in X_0} \lambda_x \cdot \Big( \sum_{y \in \cR(x)_0} \frac{\lambda_y}{\lambda_x} \cdot y \Big),
 \end{equation*}
 where $\lambda_x := \sum_{y \in \cR(x)_0} \lambda_y$. Since we think of points in $\abs{X,\cR}$ as finite sums, we do not worry about the fact that $\frac{\lambda_y}{\lambda_x}$ is undefined when $\lambda_x = 0$.
 
 For $y \in X[\cR]_0$, let $x(y)$ denote the unique vertex $x \in X_0$ such that $y \in \cR(x)_0$. Given $x \in X_0$ and $\eta_x \in \abs{\cR(x)}$,
 write $\eta_x = \sum_{y \in \cR(x)_0} \eta_{x,y} \cdot y$. Then we can also define
 \begin{equation*}
  F' \colon \abs{X,\cR} \to \abs{X[\cR]}, \quad \sum_{x \in X_0} \lambda_x \cdot \eta_x \mapsto \sum_{y \in X[R]_0} \lambda_{x(y)}\eta_{x(y),y} \cdot y.
 \end{equation*}

 Then $F$ and $F'$ are mutually inverse bijections. This will turn out to be a more convenient model for $\abs{X[\cR]}$.
 In particular, the $\ell^1$-metric on $\abs{X[\cR]}$ induces a metric $d^1$ on $\abs{X,\cR}$ via $F'$.
\end{remark}

\begin{proposition}\label{prop_ResolutionHomotopyInvariance}
 Let $X$ be a \gscplx{G}\ and let $\cR$ and $\cR'$ be sets of resolution data for $X$.
 Suppose that $\tau \colon \cR \to \cR'$ is a natural transformation such that
 $\tau_x \colon \cR(x) \to \cR'(x)$ is a homotopy equivalence for all $x$.
 
 Then the induced map $X[\tau] \colon X[\cR] \to X[\cR']$ is a homotopy equivalence.
\end{proposition}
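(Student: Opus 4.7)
The plan is to exhibit $X[\cR]$ as being built from simplicial joins of the $\cR(x_i)$ parametrised by the simplices of $X$, and then to verify the statement by skeletal induction on $X$ combined with the classical fact that joins of homotopy equivalences are homotopy equivalences.

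For the local picture, fix a simplex $\sigma = \{x_0, \dots, x_n\}$ of $X$ and let $\cR[\sigma]$ denote the full subcomplex of $X[\cR]$ spanned by the vertices $\bigcup_i \cR(x_i)_0$. Because the vertex sets $\cR(x_i)_0$ are pairwise disjoint (second clause of Definition \ref{def_SetOfResoultionData}) and every subset of $\sigma$ is again a simplex of $X$, unwinding Definition \ref{def_resolution} identifies $\cR[\sigma]$ with the iterated simplicial join $\cR(x_0) * \cdots * \cR(x_n)$. Similarly, $\partial\cR[\sigma] := \bigcup_{\sigma' \subsetneq \sigma} \cR[\sigma']$ coincides with the join boundary $\bigcup_{i} \cR(x_0) * \cdots * \widehat{\cR(x_i)} * \cdots * \cR(x_n)$. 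The restriction of $X[\tau]$ to $\cR[\sigma]$ is then the join $\tau_{x_0} * \cdots * \tau_{x_n}$ of homotopy equivalences, hence itself a homotopy equivalence (by a gluing argument applied to the standard pushout presentation of $A*B$).

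To globalise, write $X^{(n)}$ for the $n$-skeleton of $X$ and set $X^{(n)}[\cR] := \bigcup_{\sigma \in X^{(n)}} \cR[\sigma]$. For each $n$-simplex $\sigma$ the inclusion $\partial\cR[\sigma] \hookrightarrow \cR[\sigma]$ is a subcomplex inclusion and thus a cofibration, yielding a natural pushout square
\[
\begin{array}{ccc}
\coprod_{\sigma \in X_n} \partial\cR[\sigma] & \hookrightarrow & X^{(n-1)}[\cR] \\
\downarrow & & \downarrow \\
\coprod_{\sigma \in X_n} \cR[\sigma] & \hookrightarrow & X^{(n)}[\cR] .
\end{array}
\]
Induction on $n$ proves the proposition: the base case is $X^{(0)}[\cR] = \coprod_{x \in X_0} \cR(x)$, on which $X[\tau]$ is just the coproduct of the $\tau_x$. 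In the inductive step the hypothesis applied to the simplicial complex $\partial\sigma$ (of dimension $<n$) with its restricted resolution data shows that $X[\tau]$ is a homotopy equivalence on $\partial\cR[\sigma]$, and applied to $X^{(n-1)}$ handles the upper right corner; the previous paragraph handles the lower left corner. The gluing lemma for pushouts along cofibrations supplies the inductive step, and a sequential-colimit argument extends the conclusion to all of $X[\cR]$ when $X$ is infinite-dimensional.

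The main obstacle is the combinatorial identification in the second paragraph of the local piece $\cR[\sigma]$ with an iterated simplicial join: the disjointness clause in Definition \ref{def_SetOfResoultionData} is essential, since it is what allows a simplex $y \in X[\cR]$ to be partitioned unambiguously as $y = \coprod_{x \in S(y)} S_x(y)$, and one must also verify that every subset of the vertex set of $\sigma$ arises as a valid $S(y)$ (which it does, because every such subset is a face of $\sigma$ and hence a simplex of $X$). Once this local model is available, the remaining ingredients -- joins of homotopy equivalences, the gluing lemma for pushouts along cofibrations, and sequential colimits -- are entirely standard. A conceptually slicker alternative would be to work directly on the space $\abs{X,\cR}$ of Remark \ref{rem_AlternativeRealisation}, where the portion lying over an individual simplex $\sigma$ of $X$ is literally the join of the geometric realisations $\abs{\cR(x_i)}$.
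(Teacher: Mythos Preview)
Your argument is correct, but it takes a genuinely different route from the paper. The paper's proof is much shorter and more direct: it works entirely in the model $\abs{X,\cR}$ of Remark~\ref{rem_AlternativeRealisation} and simply writes down an explicit homotopy inverse and explicit homotopies. Namely, after choosing homotopy inverses $f_x$ to $\abs{\tau_x}$ and homotopies $H_x$, $H'_x$ witnessing the two compositions, one defines
\[
 f\Big(\sum_x \lambda_x \cdot \eta_x\Big) := \sum_x \lambda_x \cdot f_x(\eta_x),
\]
and similarly for $H$ and $H'$; a one-line check shows these do the job. No skeletal induction, no gluing lemma, no join identification is needed. Your closing remark about working on $\abs{X,\cR}$ is in fact precisely what the paper does, except that the paper does not even need to recognise the fibre over a simplex as a join---the formula above handles everything at once.

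What your approach buys is a structural explanation: it makes transparent that $X[\cR]$ is assembled from joins of the $\cR(x)$ along the combinatorics of $X$, and it would generalise readily to situations where one only knows the $\tau_x$ are weak equivalences and wants to invoke homotopy-colimit machinery. The paper's approach buys brevity and avoids any appeal to the gluing lemma or to colimit arguments, at the cost of requiring one to have the explicit coordinate model $\abs{X,\cR}$ in hand.
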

\begin{proof}
 Choose a homotopy inverse $f_x \colon \abs{\cR'(x)} \to \abs{\cR(x)}$ to $\abs{\tau_x}$ for every $x \in X_0$,
 and let $H_x \colon \abs{\cR(x)} \times [0,1] \to \abs{\cR(x)}$ and $H'_x \colon \abs{\cR'(x)} \times [0,1] \to \abs{\cR'(x)}$
 be homotopies witnessing $f_x \circ \abs{\tau_x} \simeq \id_{\abs{\cR(x)}}$ and $\abs{\tau_x} \circ f_x \simeq \id_{\abs{\cR'(x)}}$,
 respectively.
 
 Using the alternative description of $\abs{X[\cR]}$ from Remark \ref{rem_AlternativeRealisation}, we can define a (non-equivariant) map
 \begin{equation*}
  f \colon \abs{X,\cR'} \to \abs{X,\cR}, \quad \sum_x \lambda_x \cdot \eta_x \mapsto \sum_x \lambda_x \cdot f_x(\eta_x).
 \end{equation*}
 Similarly, there are induced homotopies  $H \colon \abs{X,\cR} \times [0,1] \to \abs{X,\cR}$ and
 $H' \colon \abs{X,\cR'} \times [0,1] \to \abs{X,\cR'}$, and it is easy to check that these witness
 $f \circ \abs{X[\tau]} \simeq \id_{\abs{X,\cR}}$ and $\abs{X[\tau]} \circ f \simeq \id_{\abs{X,\cR'}}$, respectively.
\end{proof}

\begin{corollary}\label{cor_ResolutionByContractibleComplexes}
 Let $X$ be a \gscplx{G} and $\cR$ a \srd\ on $X$. Suppose that $\cR(x)$ is contractible for all $x \in X_0$.
 Then the canonical map $X[\cR] \to X$ is a homotopy equivalence. \qed
\end{corollary}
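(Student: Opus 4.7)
The plan is to reduce to Proposition \ref{prop_ResolutionHomotopyInvariance} by comparing $\cR$ with a trivial set of resolution data. Concretely, I would define $\cR'$ by letting $\cR'(x)$ be a one-point simplicial complex (choosing the underlying vertex sets so that $\{ \cR'(x)_0 \}_{x \in X_0}$ is a pairwise disjoint family) and letting $\cR'(g) \colon \cR'(x) \to \cR'(x')$ be the unique map, which sends the single vertex to the single vertex. This is clearly functorial, and the conditions in Definition \ref{def_SetOfResoultionData} hold trivially: an endomorphism $\cR'(g) \colon \cR'(x) \to \cR'(x)$ is the identity, and the disjointness requirement is built into the construction.

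Next I would identify $X[\cR']$ with $X$. By definition, the vertex set of $X[\cR']$ is $\coprod_{x \in X_0} \cR'(x)_0$, which is canonically in bijection with $X_0$. Under this bijection a subset $y \subseteq X[\cR']_0$ forms a simplex precisely when $S_x(y)$ is a (necessarily $0$-dimensional or empty) simplex in $\cR'(x)$ and $S(y) \in X$, which amounts to saying that $y$ corresponds to a simplex of $X$. The $G$-action also matches, so the assignment $y \mapsto S(y)$ yields an isomorphism of \gscplx{G}s $X[\cR'] \xrightarrow{\cong} X$.

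Now I would introduce the natural transformation $\tau \colon \cR \to \cR'$ whose component $\tau_x \colon \cR(x) \to \cR'(x)$ is the unique simplicial map collapsing $\cR(x)$ to a point. Naturality in $\transport{G}{X_0}$ is automatic because $\cR'$ has at most one morphism between any two objects with the same source-target pair. Since $\cR(x)$ is contractible by assumption, each $\tau_x$ is a homotopy equivalence. By Proposition \ref{prop_ResolutionHomotopyInvariance}, the induced simplicial map $X[\tau] \colon X[\cR] \to X[\cR']$ is a homotopy equivalence. Composing with the isomorphism $X[\cR'] \cong X$ and unwinding the definitions shows that $X[\tau]$ is precisely the canonical map $X[\cR] \to X$ sending a vertex $y \in \cR(x)_0$ to $x$, which is therefore a homotopy equivalence.

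There is no real obstacle here; the only thing to be careful about is to check that the collapse map $\tau$ is genuinely a natural transformation in $\transport{G}{X_0}$ and that under the identification $X[\cR'] \cong X$ the map $X[\tau]$ coincides with what one would naturally call "the canonical map".
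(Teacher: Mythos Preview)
Your proposal is correct and is precisely the argument the paper has in mind: the corollary is marked with a \qed\ because it follows immediately from Proposition~\ref{prop_ResolutionHomotopyInvariance} by taking $\cR'$ to be the trivial set of resolution data and identifying $X[\cR']$ with $X$, exactly as you describe.
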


We can utilise this construction to reduce the size of the stabilisers of a \gscplx{G} incrementally.
Specifically, we will give an answer to the question of what the smallest possible stabilisers of a finite group action
on a finite, contractible complex are.

\begin{definition}\label{def_Families}
 Let $\cyc$ denote the family of finite cyclic groups.
 For a given prime $p$, we let
 \begin{equation*}
  \begin{split}
   \cyc_p := \{ H \mid &\text{ There is an extension  } 1 \to P \to H \to C \to 1 \text{ such that } \\ &\text{\quad$P$ is a finite $p$-group and } C \in \cyc. \}
  \end{split}
 \end{equation*}
 denote the class of groups which are \emph{cyclic mod $p$}.
 
 Finally, we call
 \begin{equation*}
  \begin{split}
   \dress := \{ G \mid &\text{ There is an extension  } 1 \to H \to G \to Q \to 1 \text{ such that} \\ &\quad H \in \cyc_p \text{ and } Q \text{ is a finite $q$-group for some primes $p$ and $q$.} \}
  \end{split}
 \end{equation*}
 the \emph{Dress family}.
\end{definition}

\begin{definition}\label{def_depth}
 Let $G$ be a finite group. Define the \emph{depth of $G$} to be
 \begin{equation*}
  \begin{split}
   \depth(G) := \sup \{ n \mid &\text{ There is a properly descending chain of subgroups } \\& \quad G_1 \varsupsetneq G_2 \varsupsetneq \dots \varsupsetneq G_n \text{ in } G. \}.
  \end{split}
 \end{equation*}
\end{definition}

Observe that it is easy to find upper bounds for the depth of a finite group.
If $\abs{G} = p_1^{k_1} \dots p_r^{k_r}$ is the prime factorisation of the order of $G$,
the depth of $G$ cannot exceed $k_1 + \dots + k_r$.

\begin{theorem}[Oliver]\label{thm_OliversTheorem}
 There is a monotonely increasing, affine linear function $\bound \colon \NN_+ \to \NN_+$
 such that for every finite group $G \notin \dress$, there is a finite, contractible \gscplx{G}\ $X$ with $X^G = \emptyset$
 whose dimension is bounded by $\bound(\depth(G))$.
\end{theorem}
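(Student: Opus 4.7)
The plan is to bootstrap Oliver's classical existence result from \cite{MR0375361}: for $G \notin \dress$, Oliver produces a finite, contractible CW-complex carrying a $G$-action without global fixed point. The two features the present theorem adds to that existence statement are (i) promotion of the CW-action to one on a genuine \gscplx{G}\ in the strict sense of \S \ref{sec_resolutions} (namely, every simplex stabiliser fixes its simplex pointwise), and (ii) an affine linear bound on the dimension depending only on $\depth(G)$.

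For (i), I would equivariantly triangulate Oliver's CW-complex and then pass to its (at most second) barycentric subdivision. A barycentric subdivision converts any simplicial action into one in which whenever a group element fixes a simplex, it fixes that simplex pointwise; this is exactly the defining property of a \gscplx{G}, and the operation costs only a bounded multiplicative factor in dimension, so it cannot spoil an affine linear bound.

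For (ii), I would proceed by induction on $\depth(G)$, exploiting the fact that $\depth$ strictly decreases when passing to proper subgroups. The inductive step would retrace the stages of Oliver's construction, which assembles a fixed-point-free contractible complex for $G$ by starting with a finite join of induced $G$-sets $G/H$ with $H$ a suitable proper subgroup (whose dimension is controlled by the number of joins taken) and then successively attaching $G$-orbits of cells of higher dimensions to kill the remaining reduced homology. The point of Oliver's theorem is that the relevant obstruction classes vanish precisely when $G \notin \dress$; the induction hypothesis, applied to proper subgroups of $G$ (or to subgroup-quotients that remain outside $\dress$), controls the dimension required to carry out each cell-attachment stage.

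The main obstacle will be the bookkeeping: one has to verify that the total number of stages in Oliver's construction can be taken linear in $\depth(G)$ and that each stage contributes only a bounded absolute increment to the dimension, so that the cumulative bound is affine linear rather than (say) polynomial. Since depth is subadditive under extensions, this amounts to inspecting Oliver's construction stage by stage and observing that the homological obstructions encountered can be resolved by reference to proper subquotients of $G$, on which the inductive hypothesis applies with a strictly smaller depth.
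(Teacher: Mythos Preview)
Your part (i) is fine, though note that barycentric subdivision does not change dimension at all, so there is no multiplicative factor to worry about; the paper simply cites \cite[Prop.~A.4]{MR1961198} for this step.

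Part (ii), however, has a genuine gap. Your proposed induction is over $\depth(G)$, with the induction hypothesis applied to proper subgroups or subquotients of $G$ that remain outside $\dress$. But Oliver's construction does not recurse in this way, and your scheme cannot be made to work: a group $G \notin \dress$ will typically have plenty of proper subgroups that \emph{are} in $\dress$ (cyclic, $p$-groups, hyperelementary, etc.), so the theorem's induction hypothesis is unavailable for them. There is no mechanism in Oliver's argument that reduces the problem for $G$ to the same problem for smaller non-Dress groups.

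What actually produces the affine linear bound is an induction of a different sort, and it is worth spelling out. For a fixed $G$, Oliver builds the complex by iterating over the subgroups $H \leq G$, ordered by a rank function $\rank(H)$ that measures the longest chain of subgroups strictly above $H$; one has $\rank(\{1\}) = \depth(G)$. At the stage corresponding to $H$ one attaches cells of orbit type $G/H$, and the crucial observation is that these cells can always be placed in dimensions at most $2 \cdot \rank(H)$: for non-$p$-groups one only adjusts Euler characteristics, while for $p$-groups one makes the $H$-fixed set $\ZZ/p$-acyclic, which requires two new dimensions beyond those already used by strictly larger subgroups. This yields a $G$-resolution $Y$ of dimension at most $2\depth(G)$. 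One then takes the join $Y * Y$ (dimension $\leq 4\depth(G)+1$), observes that its top homology is a tensor square of a projective and hence stably free, and attaches free cells to kill it (one more dimension). The resulting bound is $\bound(d) = 4d + 2$. The linearity in $\depth(G)$ thus comes from the subgroup-lattice induction internal to a single $G$, not from any recursion over the class of non-Dress groups.
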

\begin{proof}
 Excluding the dimension bound, this is stated as one of the main results of \cite{MR0375361}.
 Nevertheless, the proof given by Oliver can be seen to provide the claimed bound.
 A short review of the proof which makes this explicit can be found in the appendix.
\end{proof}


\begin{corollary}\label{cor_OliversTheorem}
 For every finite group $G$, there is a finite, contractible \gscplx{G}\ $X$ whose stabilisers lie in $\dress$
 and whose dimension is bounded by
 \begin{equation*}
  \sum_{\emptyset \neq M \subset \{ 1, \dots, \depth(G) \} } \prod_{m \in M} \bound(m) \leq 2^{\depth(G)} \cdot \bound(\depth(G))^{\depth(G)}.
 \end{equation*}
\end{corollary}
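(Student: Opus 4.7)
The plan is to iterate the \reso\ construction, proceeding by induction on $N := \depth(G)$, with Theorem \ref{thm_OliversTheorem} supplying the inductive input and Corollary \ref{cor_ResolutionByContractibleComplexes} preserving contractibility at each step. If $G \in \dress$ (in particular if $N = 1$, so $G$ is trivial), simply take $X$ to be a point with trivial $G$-action.

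For the inductive step, assume $G \notin \dress$. Theorem \ref{thm_OliversTheorem} produces a finite contractible \gscplx{G}\ $Y$ with $Y^G = \emptyset$ and $\dim Y \leq \bound(N)$. Since no vertex of $Y$ is $G$-fixed, every stabiliser $G_y$ is a proper subgroup of $G$, and hence $\depth(G_y) \leq N-1$. By induction, each $G_y$ admits a finite contractible \gscplx{G_y}\ $Z_y$ whose stabilisers lie in $\dress$ and whose dimension is bounded by $\sum_{\emptyset \neq M \subset \{1, \dots, N-1\}} \prod_{m \in M} \bound(m)$. I pick such a $Z_y$ once per $G$-orbit on $Y_0$, relabel vertices so that the sets $Z_y{}_0$ are pairwise disjoint, and extend equivariantly as in Remark \ref{rem_SetOfResolutionData} to obtain a \srd\ $\cR$ on $Y$.

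Set $X := Y[\cR]$. This is a finite \gscplx{G}, contractible by Corollary \ref{cor_ResolutionByContractibleComplexes}. By the observation immediately after Definition \ref{def_resolution}, every stabiliser of $X$ arises as a stabiliser in some $Z_y$ and therefore lies in $\dress$. The dimension estimate from the same paragraph, together with the inductive bound, yields
\begin{equation*}
 \dim X + 1 \leq (\bound(N) + 1) \cdot \prod_{m=1}^{N-1}\bigl(\bound(m)+1\bigr) = \prod_{m=1}^{N}\bigl(\bound(m)+1\bigr),
\end{equation*}
and expanding the right-hand side over nonempty subsets of $\{1, \dots, N\}$ produces the claimed sum; the coarser estimate $2^N \cdot \bound(N)^N$ follows because $\bound$ is monotone and $\{1, \dots, N\}$ has fewer than $2^N$ nonempty subsets.

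The only real subtlety is the patching step: one has to check that the orbit-wise choices of the $Z_y$ assemble into a functor on $\transport{G}{Y_0}$ satisfying both conditions of Definition \ref{def_SetOfResoultionData}. The fixed-point-on-simplex condition is automatic from each $Z_y$ being a \gscplx{G_y}, and the disjointness of vertex sets is arranged by labelling. Beyond Oliver's theorem, no new geometric input is required.
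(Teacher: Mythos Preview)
Your argument is correct and follows the same route as the paper: induct on $\depth(G)$, apply Theorem~\ref{thm_OliversTheorem} to obtain a fixed-point-free complex, use the induction hypothesis on the proper stabilisers to build a \srd, and take the \reso, with contractibility coming from Corollary~\ref{cor_ResolutionByContractibleComplexes}. You are slightly more explicit than the paper about the dimension bookkeeping (rewriting the sum over subsets as $\prod_m(\bound(m)+1)-1$), but the substance is identical.
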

\begin{proof}
 We can assume without loss of generality that $G \notin \dress$.
 Theorem \ref{thm_OliversTheorem} asserts the existence of a finite, contractible \gscplx{G} $X'$
 which does not have a global fixed point and whose dimension is bounded by $\bound(\depth(G))$.
 By induction, there exists for every vertex $x$ of $X'$ a finite, contractible \gscplx{G_x}\ $\cR(x)$
 whose stabilisers lie in $\dress$ and which satisfies the claimed dimension bound.
 Picking one such complex for a representative of each $G$-orbit gives rise to
 a \srd\ $\cR$ on $X'$ (see Remark \ref{rem_SetOfResolutionData}). Then $X := X'[\cR]$ has the desired properties.
\end{proof}

The \gscplx{G} $X$ from Corollary \ref{cor_OliversTheorem} will typically not be a model
for $E_{\dress}G$. See \cite[bottom of p.~93]{MR0423390} for a proof of this.

\section{A variant of the Farrell--Hsiang condition}\label{sec_dfh}

We are now ready to formulate our strengthening of the Farrell--Hsiang condition
and to relate it to the property of being transfer reducible.
Whenever we speak about generating sets of groups,
we assume these to be symmetric for convenience.

\begin{definition}\label{def_fhbdd}
 Let $G$ be a group and $S$ a finite generating set for $G$. Let $\cF$ be a family of subgroups of $G$.
 
 Call $(G,S)$ a \emph{\fhbdd\ with respect to $\cF$} if there exist $N \in \NN$ and $B \in \NN$ such that for every $\epsilon > 0$ there are
 \begin{itemize}
  \item an epimorphism $\pi \colon G \twoheadrightarrow F$ to a finite group with depth $\depth(F) \leq B$ and
  \item for every subgroup $D \leq F$ with $D \in \dress$ a \gscplx{G}\ $E_D$ of dimension at most $N$
   whose isotropy groups lie in $\cF$, and a $\overline{D} := \pi^{-1}(D)$-equivariant map
   $f_D \colon G \to E_D$ such that $d^{\ell^1}(f_D(g),f_D(g')) \leq \epsilon$ whenever $g^{-1}g' \in S$.
 \end{itemize}
 
 We say that $(G,S)$ is \emph{combinatorially transfer reducible with respect to $\cF$} if there exists $\nu \in \NN$
 such that for every $\epsilon > 0$ there are
 \begin{itemize}
  \item a finite, contractible \gscplx{G}\ $X$,
  \item a \gscplx{G}\ $E$ of dimension at most $\nu$ whose isotropy groups lie in $\cF$ and
  \item a map $f \colon X \to E$ which is $S$-equivariant up to $\epsilon$, i.e., such that
   \begin{equation*}
    d^{\ell^1}(sf(x),f(sx)) \leq \epsilon
   \end{equation*}
   holds for all $s \in S$ and $x \in X$.
 \end{itemize}
\end{definition}

Clearly, the notion of being a \fhbdd\ is a strengthening of the Farrell--Hsiang condition (\cite[Def.~1.1]{MR2957625}, \cite[Def.~2.14]{MR3164984}, \cite[Thm.~C]{AB-OnProofsOfTheFJC});
combinatorial transfer reducibility is a very special case of the more general notions of transfer reducibility
explained in \cite[Thm.~ A \& Thm.~B]{AB-OnProofsOfTheFJC} (see also \cite[Def.~1.8]{MR2993750}, \cite[Def.~0.4]{MR2967054}).

An immediate consequence of Corollary \ref{cor_OliversTheorem} is the following variant of the result
that finite groups satisfy the isomorphism conjecture with respect to the family of hyperelementary subgroups \cite[Thm.~2.9 \& Lem.~4.1]{MR2328114}:

\begin{corollary}
 Every finite group is combinatorially transfer reducible with respect to $\dress$.
 In particular, the assembly map
 \begin{equation*}
  \alpha^G_{\dress} \colon H^G_n(E_{\dress}G;\KK^{-\infty}_\cA) \to K_n(\cA *_G G/G)
 \end{equation*}
 is an isomorphism for every finite group $G$ and every small additive $G$-category $\cA$.
 \qed
\end{corollary}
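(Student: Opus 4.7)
The plan is to observe that for a finite group $G$, combinatorial transfer reducibility follows almost tautologically from Corollary \ref{cor_OliversTheorem}. Indeed, since $G$ is finite, we may pick any finite generating set $S$. Corollary \ref{cor_OliversTheorem} supplies a finite, contractible \gscplx{G}\ $X$ whose stabilisers lie in $\dress$ and whose dimension is bounded by an absolute constant $\nu$ depending only on $\depth(G)$. Set $E := X$ and take $f := \id_X \colon X \to E$. Then $f$ is strictly $G$-equivariant (in particular $S$-equivariant), so the $\epsilon$-equivariance condition $d^{\ell^1}(sf(x),f(sx)) \leq \epsilon$ is trivially satisfied for every $\epsilon > 0$, with no dependence of $X$, $E$, or $f$ on $\epsilon$. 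This witnesses combinatorial transfer reducibility of $(G,S)$ with respect to $\dress$.

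To deduce the isomorphism statement for the assembly map $\alpha^G_{\dress}$, I would invoke the implication ``combinatorial transfer reducibility with respect to $\cF$ implies the Farrell--Jones Conjecture with coefficients with respect to $\cF$''. This is a special case of the general transfer reducibility theorems referenced in the paper (\cite[Thm.~A \& Thm.~B]{AB-OnProofsOfTheFJC}, building on \cite[Def.~1.8]{MR2993750} and \cite[Def.~0.4]{MR2967054}). Since combinatorial transfer reducibility as defined in Definition \ref{def_fhbdd} is a strictly stronger (and much more rigid) condition than transfer reducibility in any of the ambient senses, no additional verification beyond what was just constructed is needed.

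There is effectively no obstacle: the content of this corollary is entirely concentrated in Corollary \ref{cor_OliversTheorem}, and the work of passing from the existence of a resolution by $\dress$-stabilised contractible complexes to the $K$-theoretic conclusion is outsourced to the general machinery of transfer reducibility. The only mildly subtle point to flag is that, because $G$ is finite, one does not need to take a limit as $\epsilon \to 0$ or produce a family of transfer spaces; a single choice of $(X,E,f)$ works for all $\epsilon$ simultaneously, which explains why the conclusion is, as remarked in the paper, a variant of the classical result that finite groups satisfy the isomorphism conjecture relative to the family of hyperelementary subgroups.
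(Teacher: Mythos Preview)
Your proposal is correct and matches the paper's intended argument exactly: the paper states the corollary with an immediate \qed, calling it ``an immediate consequence of Corollary~\ref{cor_OliversTheorem}'', and you have spelled out precisely that consequence by taking $E = X$ and $f = \id_X$. There is nothing to add.
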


The more important observation is the following.

\begin{theorem}\label{thm_fhbddAreTR}
 Let $G$ be a group and $S$ a finite generating set for $G$. Let $\cF$ be a family of subgroups of $G$.
 If $(G,S)$ is a \fhbdd\ with respect to $\cF$, then it is combinatorially transfer reducible with respect to $\cF$.
\end{theorem}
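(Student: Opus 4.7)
The plan is to apply Corollary \ref{cor_OliversTheorem} to the finite quotient $F$ supplied by the Dress--Farrell--Hsiang condition, use the resulting contractible complex as the transfer space $X$, and then resolve it by the spaces $E_D$ to obtain both $E$ and the almost-equivariant map $f$; all of the relevant numerical bounds will be uniform in $\epsilon$. Fix $\epsilon > 0$, and let $\pi \colon G \twoheadrightarrow F$, the complexes $E_D$, and the $\overline{D}$-equivariant maps $f_D \colon G \to E_D$ be as in Definition \ref{def_fhbdd}. Applying Corollary \ref{cor_OliversTheorem} to $F$ produces a finite, contractible $F$-simplicial complex $Y$ whose isotropy lies in $\dress$ and whose dimension is bounded by a constant depending only on the bound $B$ for $\depth(F)$; I would set $X := Y$, regarded as a $G$-simplicial complex via $\pi$.

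Next, for each $G$-orbit in $X_0$ I would choose a representative $y_0$ with $F$-stabiliser $D_0 \in \dress$, and for each orbit element $y$ a lift $g_y \in G$ satisfying $\pi(g_y) y_0 = y$ (with $g_{y_0} = e$). Setting $\cR(y_0) := E_{D_0}$, declaring $\cR(g_y) := \id$, and extending along the transport groupoid by functoriality produces a set of resolution data in the sense of Remark \ref{rem_SetOfResolutionData}; the data on endomorphism groups is exactly the $\overline{D_0}$-action on $E_{D_0}$. Let $E := X[\cR]$. By the discussion following Definition \ref{def_resolution}, the isotropy of $E$ lies in $\cF$, and $\dim E \leq (\dim X + 1)(N + 1) - 1$ is bounded independently of $\epsilon$.

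For the map $f \colon \abs{X} \to \abs{E}$ I would work in the alternative model $\abs{X, \cR}$ of Remark \ref{rem_AlternativeRealisation}. Define $\eta_y := f_{D_0}(g_y^{-1}) \in \abs{\cR(y)}$ for each $y \in X_0$ (with $D_0$ the stabiliser of $y$'s orbit representative); a short computation using $\overline{D_0}$-equivariance of $f_{D_0}$ shows that $\eta_y$ is independent of the choice of $g_y$. Then set $f\bigl(\sum_y \lambda_y \cdot y\bigr) := \sum_y \lambda_y \cdot \eta_y$. To verify the almost-equivariance, I would decompose $s = g_{sy} h g_y^{-1}$ with $h := g_{sy}^{-1} s g_y \in \overline{D_0}$; functoriality of $\cR$ together with the $\overline{D_0}$-equivariance of $f_{D_0}$ identifies $s \eta_y$ with $f_{D_0}(g_{sy}^{-1} s)$, while $\eta_{sy} = f_{D_0}(g_{sy}^{-1})$, so that the two contributions at position $sy$ have group arguments differing by right multiplication by $s \in S$. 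The Dress--Farrell--Hsiang bound yields distance at most $\epsilon$ at every position, which integrates against the barycentric weights (preserved under the $G$-action) to $d^{\ell^1}(sf(x), f(sx)) \leq \epsilon$ on $\abs{X[\cR]}$.

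The main obstacle is entirely organisational: arranging the orbit representatives $y_0$, the lifts $g_y$, and the extension of $\cR$ along the transport groupoid so that all compatibility conditions hold and the $\overline{D_0}$-equivariance of $f_{D_0}$ can be invoked. Once these choices are in place, the $\epsilon$-equivariance of $f$ collapses to a single application of the Dress--Farrell--Hsiang bound $d(f_{D_0}(g), f_{D_0}(gs)) \leq \epsilon$.
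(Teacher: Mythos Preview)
Your proposal is correct and follows essentially the same route as the paper's proof: both apply Corollary~\ref{cor_OliversTheorem} to the finite quotient $F$, take the resulting complex (pulled back along $\pi$) as the transfer space $X$, resolve it by the Farrell--Hsiang data to form $E = X[\cR]$, and define $f$ in the model $\abs{X,\cR}$ of Remark~\ref{rem_AlternativeRealisation}. The only cosmetic difference is that the paper packages the resolution data via induced spaces $G \times_{\overline{D}_i} E'_i$ together with the map $g\overline{D}_i \mapsto (g, f'_i(g^{-1}))$, which is precisely your construction with explicit lifts $g_y$ unwound through Remark~\ref{rem_SetOfResolutionData}; one small caveat is that your assertion ``$\eta_y$ is independent of the choice of $g_y$'' is not literally true (changing $g_y$ alters both $\cR$ and $\eta_y$ compatibly), but this is harmless since a single fixed choice already yields the required $f$.
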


\begin{proof}
 Let $N$ and $B$ be the natural numbers whose existence is asserted in the
 definition of a \fhbdd. Fix $\epsilon > 0$, and pick an appropriate epimorphism
 $\pi \colon G \twoheadrightarrow F$ to a finite group with $\depth(F) \leq B$.
 By Corollary \ref{cor_OliversTheorem}, there exists a finite, contractible \gscplx{F}\ $X$
 whose dimension is bounded by $\beta := 2^B \cdot \bound(B)^B$ and whose isotropy groups lie in $\dress$.
 We equip $X$ with a $G$-action by restricting the $F$-action along $\pi$.
 
 After picking a representative in each $G$-orbit, the set of vertices $X_0$ can be decomposed into transitive $G$-sets
 \begin{equation*}
  X_0 = \coprod_{i \in I} G/\overline{D}_i,
 \end{equation*}
 where $\overline{D}_i$ is the preimage under $\pi$ of some $D_i \leq F$ with $D_i \in \dress$.
 
 For each $i \in I$, pick a \gscplx{G}\ $E'_i$ whose dimension is at most $N$ and whose isotropy groups lie in $\cF$,
 as well as a $\overline{D}_i$-equivariant map $f'_i \colon G \to E'_i$ such that $d^{\ell^1}(f'_i(g),f'_i(g')) \leq \epsilon$
 whenever $g^{-1}g' \in S$. Define $E_i := G \times_{\overline{D}_i} E'_i$, and let
 \begin{equation*}
  f_i \colon G/\overline{D}_i \to E_i, \quad g\overline{D}_i \mapsto (g,f'_i(g^{-1})).
 \end{equation*}
 Then $f_i$ is well-defined and $S$-equivariant up to $\epsilon$.  
 As explained in Remark \ref{rem_SetOfResolutionData}, the collection $\{ E_i \}_{i \in I}$ determines a \srd\ $\cE$ for $X$.
 
 Using the identification $E_i \cong \coprod_{g\overline{D}_i} E'_i$, one checks easily that $f_i(g\overline{D}_i)$
 is a point in the copy of $E'_i$ based on $g\overline{D}_i$. Therefore, using Remark \ref{rem_AlternativeRealisation},
 we may define $f \colon X \to \abs{X[\cE]}$ as a map $f \colon X \to \abs{X,\cE}$ via
 \begin{equation*}
  \begin{split}
   f\big( \sum_{i \in I} \sum_{g\overline{D}_i \in G/\overline{D}_i} \lambda_{g\overline{D}_i} g\overline{D}_i \big)
   &:= \sum_{i \in I} \sum_{g\overline{D}_i \in G/\overline{D}_i} \lambda_{g\overline{D}_i} f_i(g\overline{D}_i) \\
   &= \sum_{i \in I} \sum_{g\overline{D}_i \in G/\overline{D}_i} \lambda_{g\overline{D}_i} (g,f'_i(g^{-1})).
  \end{split}
 \end{equation*}
 Recall that the metric $d^1$ on $\abs{X,\cE}$ is induced by the $\ell^1$-metric on $\abs{X[\cE]}$
 via the bijection $F'$ from Remark \ref{rem_AlternativeRealisation}.
 Consequently, for two points $\eta = \sum_x \lambda_x \cdot \eta_x$ and $\theta = \sum_x \mu_x \cdot \theta_x$ in $\abs{X,\cE}$ we have
 \begin{equation*}
  \begin{split}
   d^1(\eta,\theta)
   = d^{\ell^1}(F'(\eta),F'(\theta))
   = \sum_{y \in X[\cE]_0} \abs{ \lambda_{x(y)}\eta_{x(y),y} - \mu_{x(y)}\theta_{x(y),y)} }.
  \end{split}
 \end{equation*}
 Using the triangle inequality, we can bound this by
 \begin{equation*}
  \begin{split}
   \sum_{y \in X[\cE]_0} &\abs{ \lambda_{x(y)}\eta_{x(y),y} - \mu_{x(y)}\theta_{x(y),y)} } \\
   &\leq \sum_{y \in X[\cE]_0} \abs{ \lambda_{x(y)}\eta_{x(y),y} - \lambda_{x(y)}\theta_{x(y),y} } + \sum_{y \in X[\cE]_0} \abs{ \lambda_{x(y)}\theta_{x(y),y} - \mu_{x(y)}\theta_{x(y),y} } \\
   &= \sum_{x \in X_0} \big( \lambda_x \cdot \sum_{y \in \cE(x)_0} \abs{ \eta_{x,y} - \theta_{x,y} } \big) + \sum_{x \in X_0} \big( \abs{\lambda_x - \mu_x} \cdot \sum_{y \in \cE(x)_0} \theta_{x,y} \big) \\
   &= \sum_{x \in X_0} \lambda_x \cdot d^{\ell^1}(\eta_x,\theta_x) + \sum_{x \in X_0} \abs{\lambda_x - \mu_x}.
  \end{split}
 \end{equation*}
 We will now use this estimate to prove that the map $f$ is $S$-equivariant up to $\epsilon$.
 Let $s \in S$. Then we have
 \begin{equation*}
  \begin{split}
   d^1\big( s \cdot &f\big( \sum_{i \in I} \sum_{g\overline{D}_i \in G/\overline{D}_i} \lambda_{g\overline{D}_i} g\overline{D}_i \big),
    f\big( s \cdot \sum_{i \in I} \sum_{g\overline{D}_i \in G/\overline{D}_i} \lambda_{g\overline{D}_i} g\overline{D}_i \big) \big) \\
   &= d^1\big( \sum_{i \in I} \sum_{g\overline{D}_i \in G/\overline{D}_i} \lambda_{s^{-1}g\overline{D}_i} (g,f'_i(g^{-1}s)),
    \sum_{i \in I} \sum_{g\overline{D}_i \in G/\overline{D}_i} \lambda_{s^{-1}g\overline{D}_i} (g,f'_i(g^{-1})) \\
   &\leq \sum_{i \in I} \sum_{g\overline{D}_i \in G/\overline{D}_i} \lambda_{s^{-1}g\overline{D}_i} d^{\ell^1}((g,f'_i(g^{-1}s)),(g,f'_i(g^{-1})) \\
    &\qquad+ \sum_{i \in I} \sum_{g\overline{D}_i \in G/\overline{D}_i} \abs{\lambda_{s^{-1}g\overline{D}_i} - \lambda_{s^{-1}g\overline{D}_i}} \\
   &\leq \epsilon   
  \end{split}
 \end{equation*}
 since the $\ell^1$-metric on any \gscplx{G}\ is $G$-invariant and $s^{-1}gg^{-1} = s^{-1} \in S$. So $f$ is $S$-equivariant up to $\epsilon$.
 All isotropy groups of the \gscplx{G}\ $X[\cE]$ lie in $\cF$, and the dimension of $X[\cE]$ is bounded by $\nu := \beta N + \beta + N$.
 Observe that $\nu$ depends only on $N$ and $B$. This proves the theorem.
\end{proof}

\section{Overview of the Bartels--Farrell--L\"uck--Quinn-argument}\label{sec_overview}

To keep the exposition in this and the following two sections short,
we do not elaborate on a number of arguments which are carried out in full detail in \cite{MR3164984};
the reader is advised to keep a well-read copy of \cite{MR3164984} close-by.

We are now going to give an outline of the proof of Thm.~1.1 in \cite{MR3164984}.
There are two classes of groups which play a particularly prominent role :

A group $\Gamma$ is called \emph{crystallographic} if it contains a normal subgroup $A$
which is finitely generated and free abelian such that $A$ has finite index and equals
its own centraliser in $\Gamma$. The subgroup $A$ is unique; the \emph{rank} of $\Gamma$
is defined to be the rank of $A$, and equals the virtual cohomological dimension of $\Gamma$
(see \cite[Sec.~3.1]{MR3164984}).

The second important class of groups is that of \emph{special affine groups};
these are those groups $\Gamma$ for which there is an extension
\begin{equation*}
 1 \to \Theta \to \Gamma \to \Delta \to 1
\end{equation*}
and an action $\rho' \colon \Gamma \times \RR^n \to \RR^n$ by affine motions such that
the restriction of $\rho'$ to $\Theta$ is a cocompact, isometric and proper action, and
$\Delta$ is either infinite cyclic or infinite dihedral.
A special affine group $\Gamma$ is \emph{irreducible} if for every epimorphism $\Gamma \to \Gamma'$
onto a virtually finitely generated abelian group $\Gamma'$,
the virtual cohomological dimension of $\Gamma'$ is at most $1$.

The proof of the Farrell--Jones Conjecture for virtually poly-$\ZZ$-groups in \cite{MR3164984}
makes heavy use of a number of inheritance properties of the conjecture. In order to
keep the exposition short, we do not recall these here, but refer the reader instead
to \cite[Sec.~2.3]{MR3164984} for a quick overview.

Let $G$ be a virtually poly-$\ZZ$-group. Using the Transitivity Principle,
the proof can proceed by induction on the virtual cohomological dimension of $G$.
One may assume that $\vcd(G) \geq 2$.
Then there is an extension $1 \to G_0 \to G \xrightarrow{pr} \Gamma \to 1$ in which $G_0$ is either finite
or a virtually poly-$\ZZ$-group which satisfies $\vcd(G_0) \leq \vcd(G) - 2$,
and $\Gamma$ is a crystallographic or a special affine group.
If $V$ is a virtually cyclic subgroup of $\Gamma$, its preimage under $pr$ is a virtually
poly-$\ZZ$-group with $\vcd(pr^{-1}(V)) < \vcd(G)$. So one only needs to prove
the conjecture for $\Gamma$.

\begin{claim}\label{claim_FJCCrystal}
 Every crystallographic group satisfies the Farrell--Jones Conjecture.
\end{claim}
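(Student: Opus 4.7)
The plan is to fix a finite symmetric generating set $S$ for $\Gamma$ and show that $(\Gamma, S)$ is a \fhbdd\ with respect to the family $\cV\cyc$ of virtually cyclic subgroups. Combined with Theorem \ref{thm_fhbddAreTR} this yields combinatorial transfer reducibility, which is a sufficient condition for the Farrell--Jones Conjecture by the results summarised in \cite[Thm.~A \& Thm.~B]{AB-OnProofsOfTheFJC}; no further inheritance argument is required since $\cV\cyc$ is already the target family.

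Let $A \trianglelefteq \Gamma$ be the maximal free abelian normal subgroup, so that $r := \rank(A) = \vcd(\Gamma)$ and $q := [\Gamma : A]$ are finite; the Bieberbach theorems endow $\Gamma$ with an isometric, proper, cocompact action on $\RR^r$ in which $A$ acts by lattice translations. For each $\epsilon > 0$, the plan is to pick an epimorphism $\pi \colon \Gamma \twoheadrightarrow F := \Gamma/pA$ for a sufficiently large prime $p = p(\epsilon)$ coprime to $q$. Since $|F| = p^r \cdot q$ we obtain $\depth(F) \leq r + \Omega(q)$, giving the uniform bound $B := r + \Omega(q)$ on $\depth(F)$ independently of $\epsilon$. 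For each Dress subgroup $D \leq F$, the construction of the $\Gamma$-simplicial complex $E_D$ and the $\overline D := \pi^{-1}(D)$-equivariant, $\epsilon$-controlled map $f_D \colon \Gamma \to E_D$ would reproduce the strategy of \cite[\S 5]{MR3164984}: combine the restriction of the Bieberbach action to $\overline D$ with a one-dimensional affine action built from the cyclic quotient appearing in the defining extension of $D$ to obtain a proper affine $\overline D$-action on $\RR^{r+1}$ with virtually cyclic isotropy; a suitable equivariant triangulation, induced up from $\overline D$ to $\Gamma$, defines $E_D$, and $f_D$ is defined coset-wise.

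The main obstacle is reconciling the uniform depth bound with the $\epsilon$-control requirement. The classical Farrell--Hsiang verification for crystallographic groups works with quotients $\Gamma/nA$ in which $n$ may have many prime factors, and it is precisely the flexibility in the factorisation of $n$ that produces the $\epsilon$-control. Bounding $\depth(F)$ uniformly in $\epsilon$ forces $n$ to have bounded prime-factor length, so in the present setting $\epsilon$-control must be extracted solely from the growth of the single prime $p \to \infty$. Verifying that this single-prime variant still yields arbitrarily small $\epsilon$, and that no non-virtually-cyclic groups appear as stabilisers of the induced $\Gamma$-complex, is the step where the argument genuinely has to be adapted compared to \cite{MR3164984}.
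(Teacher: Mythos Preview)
Your plan diverges from the paper's approach and contains a gap that you partly anticipate but which is in fact fatal for the single-prime strategy.

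The paper does \emph{not} show that an arbitrary crystallographic group $\Gamma$ is a \fhbdd\ with respect to $\cV\cyc$ in one shot. Instead, Claim~\ref{claim_FJCCrystal} is proved simultaneously with Claim~\ref{claim_FJCvfga} by a double induction on the pair $(\vcd(\Gamma),\text{holonomy of }\Gamma)$, combined with the Transitivity Principle, reducing everything to Claim~\ref{claim_FJC_Crystal.vcd}. The latter is established in \S\ref{sec_crystal} via Lemma~\ref{lem_fhbdd.z2} and Propositions~\ref{prop_fhbdd.crystalreducible} and~\ref{prop_fhbdd.crystal.vcd}, where the relevant family $\cF$ is \emph{not} $\cV\cyc$ but the larger family of subgroups that either have smaller $\vcd$ or fail to surject onto the holonomy quotient; the induction hypothesis then disposes of members of $\cF$. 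Moreover, all of these results use quotients $\Gamma_s$ with $s$ a product of \emph{several} distinct primes, precisely so that for any given Dress subgroup one can pass to a prime factor in which the image becomes tractable (e.g.\ has cyclic intersection with $A_p$).

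Your single-prime quotient $\Gamma/pA$ fails already for $\Gamma = \ZZ^n$, $n \geq 2$: here $q = 1$, your quotient $F = (\ZZ/p)^n$ is a $p$-group and hence itself lies in $\dress$, so $D = F$ is an admissible Dress subgroup with $\overline{D} = \pi^{-1}(D) = \Gamma$. The ``cyclic quotient appearing in the defining extension of $D$'' is trivial (the normal series has $P = D$ and trivial cyclic and $q$-group layers), so your extra one-dimensional affine factor degenerates and you are left needing a $\ZZ^n$-equivariant, $\epsilon$-controlled map $f_D \colon \ZZ^n \to E_D$ into a complex with virtually cyclic isotropy --- essentially the original problem. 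The same circularity arises for general $\Gamma$ whenever a Dress subgroup $D$ has $\overline{D}$ of full $\vcd$ and full holonomy; this is exactly why the paper targets a larger family and invokes the inductive and transitivity machinery rather than aiming for $\cV\cyc$ directly.
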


\begin{claim}\label{claim_FJCIrrSpecialAffine}
 Every irreducible special affine group satisfies the Farrell--Jones Conjecture.
\end{claim}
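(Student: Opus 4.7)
The plan is to show that every irreducible special affine group $\Gamma$, equipped with any finite symmetric generating set $S$, is a \fhbdd\ with respect to the family $\cV\cyc$ of virtually cyclic subgroups of $\Gamma$. Once this has been verified, Theorem \ref{thm_fhbddAreTR} yields that $(\Gamma,S)$ is combinatorially transfer reducible with respect to $\cV\cyc$, and the $K$- and $L$-theoretic Farrell--Jones Conjectures then follow from the standard transfer reducibility criterion of \cite[Thm.~A, Thm.~B]{AB-OnProofsOfTheFJC}. So the whole task reduces to verifying the bounded-depth Farrell--Hsiang condition of Definition \ref{def_fhbdd}.

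Fix the defining extension $1 \to \Theta \to \Gamma \to \Delta \to 1$ with $\Delta \in \{\ZZ, D_\infty\}$, together with the affine action $\rho' \colon \Gamma \to \mathrm{Aff}(\RR^n)$ whose restriction to $\Theta$ is cocompact, isometric and proper. Let $\Lambda \leq \Theta$ be the translation lattice; after replacing $\Lambda$ with a finite-index characteristic subgroup one may assume $\Lambda$ is normal in $\Gamma$. To provide the epimorphisms $\pi \colon \Gamma \twoheadrightarrow F$ required by Definition \ref{def_fhbdd}, one combines a reduction modulo $\ell\Lambda$ with a reduction modulo an index-$\ell$ subgroup of $\Delta$ (both are available for $\Delta \in \{\ZZ, D_\infty\}$). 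The resulting finite quotient $F_\ell$ has order bounded by a constant multiple of $\ell^{n+1}$, so the prime factorisation of $|F_\ell|$ involves only $\ell$ and finitely many fixed primes; this yields the uniform bound $\depth(F_\ell) \leq n + 1 + \depth(\Theta/\Lambda) =: B$ for every prime $\ell$. Allowing $\ell \to \infty$ provides the freedom to arrange the $\epsilon$-control.

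For each Dress subgroup $D \leq F_\ell$ with preimage $\overline{D} := \pi_\ell^{-1}(D)$, the required $\overline{D}$-equivariant map $f_D \colon \Gamma \to E_D$, $S$-equivariant up to $\epsilon$, would be built in two stages. First, use the affine action $\rho'$ to map $\Gamma$ into a $\overline{D}$-quotient of $\RR^n$ obtained by collapsing along a deep $\overline{D}$-invariant sublattice of $\Lambda$; this produces a \gscplx{\Gamma}\ of dimension at most $n$, whose $\epsilon$-control on $S$ is arranged by choosing $\ell$ large, but whose stabilisers may fail to lie in $\cV\cyc$. Second, eliminate these stabilisers by applying the resolution machinery of \S\ref{sec_resolutions}: each offending stabiliser $H$ is a finite extension of a subgroup of $\Delta$ by a subgroup of $\Theta$, and Corollary \ref{cor_OliversTheorem} produces a finite contractible \gscplx{H}\ whose stabilisers lie in $\cV\cyc$. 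Proposition \ref{prop_ResolutionHomotopyInvariance} assembles these resolutions into a single \gscplx{\Gamma}\ $E_D$ whose isotropy lies in $\cV\cyc$ and whose dimension is bounded by a universal constant $N$ depending only on $n$ and $B$.

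The main obstacle is obtaining this uniform dimension bound $N$ on $E_D$, independent of both $\epsilon$ and the Dress subgroup $D$: both the collapsing step and the resolution step can a priori introduce dimensions that depend on $D$ or on $\ell$. This is where the irreducibility hypothesis on $\Gamma$ enters decisively: irreducibility forces every preimage $\overline{D}$ to fall into one of two controlled types --- either $\overline{D} \cap \Theta$ has finite index in $\overline{D}$, in which case the construction reduces essentially to the crystallographic case of Claim \ref{claim_FJCCrystal}, or $\overline{D}$ projects onto an infinite subgroup of $\Delta$, providing a distinguished axis along which the transfer space can be constructed in dimension depending only on $n$. Managing both cases uniformly, and translating the geometric constructions of \cite[Sec.~6]{MR3164984} into the combinatorial framework of Definition \ref{def_fhbdd}, is the technical heart of the argument to be carried out in \S\ref{sec_affine}.
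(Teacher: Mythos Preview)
Your proposal has a genuine gap in the choice of target family. The paper does \emph{not} show that irreducible special affine groups are Dress--Farrell--Hsiang groups of bounded depth with respect to $\cV\cyc$; Theorem~\ref{thm_fhbdd.IrrSpecialAffine} establishes this only with respect to the family of \emph{virtually finitely generated abelian} subgroups. Claim~\ref{claim_FJCIrrSpecialAffine} then follows by combining Theorem~\ref{thm_fhbdd.IrrSpecialAffine} (via Theorem~\ref{thm_fhbddAreTR}) with the already-proved Claim~\ref{claim_FJCvfga} and the Transitivity Principle --- this is precisely the reduction to Claim~\ref{claim_FJC_IrrSpecialAffine.vfga} spelled out in \S\ref{sec_overview}. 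The two-step structure is not incidental: the transfer spaces produced following \cite[Sec.~4]{MR3164984} genuinely carry virtually abelian isotropy of rank up to $n$.

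The mechanism you propose for getting down to $\cV\cyc$ directly --- resolving the offending stabilisers via Corollary~\ref{cor_OliversTheorem} --- cannot work. That corollary, and all of \S\ref{sec_resolutions}, concerns \emph{finite} groups only. The stabilisers $H$ you would encounter are infinite virtually abelian groups such as $\ZZ^k$ with $k\geq 2$, and for such $H$ there is no non-empty finite $H$-simplicial complex with virtually cyclic isotropy at all (every orbit $H/K$ with $K$ virtually cyclic is infinite), let alone a contractible one. A secondary but real problem is your construction of the finite quotients $F_\ell$: reducing $\Lambda$ and $\Delta$ simultaneously modulo a single prime $\ell$ does not define a group homomorphism out of $\Gamma$ unless the conjugation action of the infinite cyclic $C\leq Q$ on $A/\ell A$ happens to factor through $C/\ell C$, which in general it does not. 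This is exactly why the paper takes $r$ divisible by $\abs{GL_n(\ZZ/s)}$ and invokes Miech's theorem in Lemma~\ref{lem_IrrSpecialAffine.finitequotients} to retain a uniform bound on the number of prime factors, and hence on the depth.
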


Assuming the two claims, we need only consider the case that $\Gamma$ is a special affine group
which is not irreducible. Pick an epimorphism $p \colon \Gamma \to \Gamma'$, where $\Gamma'$
is some virtually finitely generated abelian group with $\vcd(\Gamma') \geq 2$.
For any virtually cyclic subgroup $V$ of $\Gamma'$, the preimage $p^{-1}(V)$ is a virtually
poly-$\ZZ$-group with $\vcd(p^{-1}(V)) < \vcd(G)$. This leaves the final case:

\begin{claim}\label{claim_FJCvfga}
 Every virtually finitely generated abelian group satisfies the Farrell--Jones Conjecture.
\end{claim}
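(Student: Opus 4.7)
The plan is to deduce \ref{claim_FJCvfga} from \ref{claim_FJCCrystal} by means of the standard inheritance properties of the Farrell--Jones Conjecture recalled in \cite[Sec.~2.3]{MR3164984}, without further appeal to the transfer-reducibility machinery of \S\ref{sec_dfh}. The key observation is that a virtually finitely generated abelian group of virtual cohomological dimension at least two with trivial maximal finite normal subgroup is automatically crystallographic, so the general case differs from a crystallographic one only by a finite normal subgroup.

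Let $G$ be a virtually finitely generated abelian group. If $\vcd(G) \leq 1$, then $G$ is finite or virtually cyclic, and the Farrell--Jones Conjecture holds for trivial reasons. Assume therefore that $\vcd(G) \geq 2$, and let $T \leq G$ be the maximal finite normal subgroup; by maximality, $\overline{G} := G/T$ has trivial maximal finite normal subgroup. Since the Farrell--Jones Conjecture is preserved under extensions with finite kernel, it suffices to prove the conjecture for $\overline{G}$.

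I would then verify that $\overline{G}$ is crystallographic by taking $A$ to be its Fitting subgroup, i.e., the maximal normal abelian subgroup. The torsion part of $A$ is characteristic in $\overline{G}$ and finite, hence trivial by hypothesis, so $A$ is free abelian; comparing $A$ with the normal core of any finite-index abelian subgroup of $\overline{G}$ shows that $A$ has finite index and rank $r = \vcd(\overline{G}) \geq 2$. To verify that $A$ is self-centralising, set $C := C_{\overline{G}}(A)$, so that $A \leq Z(C)$ and $[C:Z(C)] \leq [C:A] < \infty$. Schur's theorem then forces $[C,C]$ to be finite; being characteristic in $C$, it is normal in $\overline{G}$, hence trivial by hypothesis. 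Consequently $C$ is abelian, and maximality of $A$ forces $C = A$. This identifies $\overline{G}$ as crystallographic, and \ref{claim_FJCCrystal} concludes the proof.

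The main obstacle is just the verification that $A$ is self-centralising in $\overline{G}$; this is handled by the short chain $A \leq Z(C) \Rightarrow [C:Z(C)] < \infty \Rightarrow [C,C]$ finite (Schur) $\Rightarrow [C,C] = 1$ (triviality of finite normal subgroups) $\Rightarrow C$ abelian $\Rightarrow C = A$ (maximality of $A$). Once this structural reduction is in place, the passage to \ref{claim_FJCCrystal} is formal and uses only the inheritance of the conjecture under extensions with finite kernel.
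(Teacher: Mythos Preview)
Your reduction is correct and uses the same structural step as the paper: pass from a virtually finitely generated abelian group to a crystallographic quotient by killing the maximal finite normal subgroup, then invoke inheritance under finite-kernel epimorphisms. The paper states this reduction in one line (``there is an epimorphism with finite kernel onto a crystallographic group with the same virtual cohomological dimension''), whereas you supply a self-contained verification via Schur's theorem that $\overline{G}=G/T$ is crystallographic; that part is a genuine addition.

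The packaging, however, differs. The paper does \emph{not} deduce Claim~\ref{claim_FJCvfga} from an independently established Claim~\ref{claim_FJCCrystal}; rather, it proves both simultaneously by induction on the pair $(\vcd,\text{holonomy})$, with Claim~\ref{claim_FJC_Crystal.vcd} serving as the inductive step. The point is that the family appearing in Claim~\ref{claim_FJC_Crystal.vcd} consists of subgroups which are virtually finitely generated abelian but not in general crystallographic, so one cannot run an induction purely inside the crystallographic world. Your argument is logically valid as a reduction $\text{\ref{claim_FJCCrystal}} \Rightarrow \text{\ref{claim_FJCvfga}}$, but in the paper's architecture Claim~\ref{claim_FJCCrystal} is itself obtained only at the end of the very induction that also yields Claim~\ref{claim_FJCvfga}; so nothing is saved, only reorganised.

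One small point: your phrase ``Fitting subgroup, i.e., the maximal normal abelian subgroup'' conflates two notions. The Fitting subgroup is the maximal normal \emph{nilpotent} subgroup; that it is abelian here (torsion-free nilpotent with an abelian subgroup of finite index forces abelian) is what makes it coincide with the unique maximal normal abelian subgroup and guarantees it contains the normal core $B$, hence has finite index. Your subsequent Schur argument then goes through cleanly.
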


In fact, the three claims also rely on each other: The proof of Claims \ref{claim_FJCCrystal} and \ref{claim_FJCvfga}
is dealt with simultaneously as follows (see also \cite{MR2826431}, especially Proposition 2.4.1).
One proceeds by induction over the virtual cohomological dimension of a virtually finitely generated abelian group $\Gamma$
as well as the smallest order of a finite group $F$ which fits into an exact sequence $1 \to \ZZ^{\vcd(\Gamma)} \to \Gamma \to F \to 1$
(Call the order of the group $F$ the \emph{holonomy} of $\Gamma$).

One may assume that $\vcd(\Gamma) \geq 2$.  Since there is an epimorphism with finite kernel onto a crystallographic
group with the same virtual cohomological dimension, the Transitivity Principle serves to reduce the proof to the following claim:

\begin{claim}\label{claim_FJC_Crystal.vcd}
 Every crystallographic group $\Gamma$ with $\vcd(\Gamma) \geq 2$ satisfies the isomorphism conjecture with respect to
 the family of all subgroups $G \leq \Gamma$ which satisfy one of the following conditions:
 \begin{itemize}
  \item $\vcd(G) < \vcd(\Gamma)$.
  \item $\vcd(G) = \vcd(\Gamma)$ and the holonomy of $G$ is smaller than the holonomy of $\Gamma$.
 \end{itemize}
\end{claim}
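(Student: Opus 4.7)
The plan is to verify that $(\Gamma, S)$ is a \fhbdd\ with respect to $\cF$ for a suitable finite generating set $S$. Theorem~\ref{thm_fhbddAreTR} then yields combinatorial transfer reducibility of $(\Gamma, S)$ with respect to $\cF$, which in turn implies the asserted isomorphism conjecture by the general transfer-reducibility criteria recorded in \cite[Thm.~A \& Thm.~B]{AB-OnProofsOfTheFJC}.

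To produce the \fhbdd\ structure, fix the crystallographic extension $1 \to A \to \Gamma \to F_0 \to 1$ with $A \cong \ZZ^n$, $n = \vcd(\Gamma) \geq 2$, and holonomy $F_0$. Given $\epsilon > 0$, I would work with the epimorphism $\pi \colon \Gamma \twoheadrightarrow F := \Gamma / p A$, where $p$ is a prime chosen large enough depending on $\epsilon$ and coprime to $\abs{F_0}$. Since $\abs{F} = \abs{F_0} \cdot p^n$ with $\abs{F_0}$ and $n$ fixed, $\depth(F)$ is bounded uniformly in $p$ by a constant $B$. For each Dress subgroup $D \leq F$, the transfer space $E_D$ should be built as a $\overline{D}$-equivariant simplicial model for (a rescaling of) the affine action of $\overline{D} := \pi^{-1}(D)$ on $\RR^n$; the required map $f_D \colon \Gamma \to E_D$ is obtained from an orbit map, and $\epsilon$-almost $S$-equivariance is secured by rescaling the metric once $p$ is chosen large enough relative to the diameters of the $S$-orbits in $\RR^n$.

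The core obstacle is arranging that every isotropy group of $E_D$ lies in $\cF$, i.e.\ either has virtual cohomological dimension strictly less than $n$, or else retains full rank $n$ but projects to a \emph{proper} subgroup of $F_0$ under the quotient $\Gamma \to F_0$. The Dress hypothesis on $D$ (an extension $1 \to H \to D \to Q \to 1$ with $H$ cyclic mod some prime and $Q$ a $q$-group) is precisely what should render this dichotomy feasible: the cyclic-mod-$p'$ subgroup $H$ supplies a distinguished direction along which one can construct cells whose stabilisers have genuinely smaller effective rank, while the $q$-group quotient is small enough to avoid exhausting the holonomy $F_0$, provided $p$ is chosen apart from $q$ and from the primes dividing $\abs{F_0}$. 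Establishing this dichotomy uniformly in $D$, producing a dimension bound $N$ independent of $\epsilon$, and verifying the required almost-equivariance constitute the bulk of the work; the construction should be viewed as a Dress-family refinement of the Farrell--Hsiang arguments for crystallographic groups of \cite{MR2826431, MR3164984}, with the class of hyperelementary subgroups replaced by $\dress$ throughout.
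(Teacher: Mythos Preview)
Your overall plan---verify the \fhbdd\ condition and then invoke Theorem~\ref{thm_fhbddAreTR}---is exactly what the paper does, but the implementation you sketch has a genuine gap: a \emph{single} prime $p$ is not enough. Take for instance $\Gamma = \ZZ^2 \rtimes_{\rho} \ZZ/3$ with $\rho$ an order-$3$ rotation; this is crystallographic of rank $2$ with no normal infinite cyclic subgroup. For any prime $p \neq 3$ the whole quotient $\Gamma_p = (\ZZ/p)^2 \rtimes \ZZ/3$ is itself a Dress group (take $P = (\ZZ/p)^2$, $H = P$, $Q = \ZZ/3$), so $D = \Gamma_p$ is admissible and $\overline{D} = \Gamma$. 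The map $f_D \colon \Gamma \to E_D$ must then be fully $\Gamma$-equivariant, i.e.\ an orbit map $\gamma \mapsto \gamma \cdot x_0$, and no ``rescaling'' of the affine $\Gamma$-action on $\RR^n$ helps: any $\Gamma$-equivariant simplicial structure is $\ZZ^n$-periodic, so for a nontrivial translation $a \in S \cap A$ the points $x_0$ and $a \cdot x_0 = x_0 + a$ lie in simplices with no common vertex and $d^{\ell^1}(x_0, a \cdot x_0) = 2$. Your caveat ``provided $p$ is chosen apart from $q$'' cannot be met, since $p$ must be fixed \emph{before} the Dress subgroup $D$ (and hence its primes) is handed to you. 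The paper circumvents this by taking $s$ to be a product of \emph{several} primes (two in Propositions~\ref{prop_fhbdd.crystalreducible} and~\ref{prop_fhbdd.crystal.vcd}, three in Lemma~\ref{lem_fhbdd.z2}); the point of Lemma~\ref{lem_dressgroupsinfinitequotients} is precisely that, given any Dress subgroup $D \leq \Gamma_s$, one can project to one of the prime-power factors $\Gamma_{p_i^r}$ so that the image becomes hyperelementary and the arguments of \cite{MR2826431, MR3164984} apply.

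There is also a structural difference you should be aware of. The paper does \emph{not} show that an arbitrary crystallographic $\Gamma$ is a \fhbdd\ with respect to the family $\cF$ of Claim~\ref{claim_FJC_Crystal.vcd} in one stroke. It separates into the case where $\Gamma$ has no normal infinite cyclic subgroup (Proposition~\ref{prop_fhbdd.crystal.vcd}, where one shows $D \cap A_s$ is forced to be trivial and then rescales exactly as you envisage) and the reducible rank-$2$ case (Proposition~\ref{prop_fhbdd.crystalreducible} and Lemma~\ref{lem_fhbdd.z2}, where the target $E_D$ is instead built from a one-dimensional quotient $\Delta_Z$). These pieces are then assembled via the Transitivity Principle. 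Your uniform ``$E_D = \RR^n$'' picture corresponds only to the first branch.
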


This takes care of claims \ref{claim_FJCCrystal} and \ref{claim_FJCvfga}. Once this has been done,
the Transitivity Principle may be invoked another time to see that it suffices to show the following claim to finish the proof.

\begin{claim}\label{claim_FJC_IrrSpecialAffine.vfga}
 Every irreducible special affine group satisfies the isomorphism conjecture with respect to the family of virtually finitely generated abelian groups.
\end{claim}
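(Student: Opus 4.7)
The plan is to verify that, for a suitable finite symmetric generating set $S$, the pair $(\Gamma,S)$ is a \fhbdd\ with respect to the family of virtually finitely generated abelian subgroups. Given this, Theorem~\ref{thm_fhbddAreTR} provides combinatorial transfer reducibility with respect to the same family, and the standard implication from transfer reducibility to the corresponding instance of the isomorphism conjecture (cf.~\cite{MR2993750,MR2967054}) then establishes Claim~\ref{claim_FJC_IrrSpecialAffine.vfga}.

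For the main construction I would follow the Farrell--Hsiang-type argument already carried out for irreducible special affine groups in \cite{MR3164984} as a template. The defining extension $1 \to \Theta \to \Gamma \to \Delta \to 1$ supplies a cocompact, proper, affine isometric $\Theta$-action on $\RR^n$ together with $\Delta \in \{\ZZ, D_\infty\}$, and the irreducibility hypothesis provides the rigidity required to produce, for each Dress subgroup $D$ of a suitable finite quotient $F$ of $\Gamma$, an approximately $S$-equivariant $\overline{D}$-equivariant map $f_D \colon \Gamma \to E_D$ into a bounded-dimensional \gscplx{G}\ with vfga isotropy. Averaging a locally constructed map over the fibres of $\pi \colon \Gamma \twoheadrightarrow F$ should then produce $f_D$ essentially as in loc.~cit.

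The principal obstacle is arranging a uniform upper bound on $\depth(F_\epsilon)$ as $\epsilon \to 0$. In \cite{MR3164984} the finite quotients are typically of the form $(\ZZ/N_\epsilon)^n \rtimes H$ for a fixed holonomy $H$ and an unbounded parameter $N_\epsilon$, whose depth grows with the sum of exponents in the prime factorisation of $N_\epsilon$. I plan to circumvent this by varying the prime rather than the exponent: at each scale I would select a single prime $p_\epsilon \to \infty$ coprime to $\abs{H}$ and use the elementary abelian reduction $\Theta \twoheadrightarrow \Theta/p_\epsilon\Theta \cong (\ZZ/p_\epsilon)^n$, combined with a fixed finite quotient of $\Delta$ through which the $\Delta$-component of the construction factors. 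The depth of the resulting $F_\epsilon$ is then bounded by a constant depending only on $n$, $\abs{H}$ and the chosen quotient of $\Delta$, and since every hyperelementary subgroup of such a group lies in $\dress$, the Dress-level requirement of Definition~\ref{def_fhbdd} is automatic. The remaining task is to check that the contraction estimates from \cite{MR3164984} survive the substitution of power-of-prime quotients by a sequence of distinct elementary abelian quotients; I expect this to follow from an equidistribution argument on the finite torus $(\ZZ/p_\epsilon)^n$ as $p_\epsilon$ becomes large, and this is the step that will demand the most care.
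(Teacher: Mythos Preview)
Your overall plan---verify that $(\Gamma,S)$ is a \fhbdd\ and then invoke Theorem~\ref{thm_fhbddAreTR}---is exactly the route the paper takes (Theorem~\ref{thm_fhbdd.IrrSpecialAffine}).  The substantive issue is the depth bound, and here your proposal contains a genuine gap.

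The Farrell--Hsiang data for an irreducible special affine group in \cite{MR3164984} use finite quotients of the form $A_s \rtimes_{\rho_{r,s}} Q_r$, where $Q = \Gamma/A$ is virtually cyclic with infinite cyclic normal subgroup $C$ and $Q_r = Q/rC$.  The dichotomy for a (hyper\-elementary, now Dress) subgroup $G$ is: either $\overline{G}\cap A \subset kA$ for some large $k\mid s$, \emph{or} the index $[\ZZ/r : \pi(pr(\overline{G}))]$ is large.  The second alternative is indispensable---it is what furnishes the contracting map in the $\Delta$-direction---and it forces $r$ to grow with the accuracy parameter.  Independently of this, for the semidirect product $A_s \rtimes Q_r$ even to be defined one needs $r$ to be divisible by the order of the reduced conjugation matrix $M_s$ in $GL_n(\ZZ/s)$, and this order is unbounded as $s$ varies.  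Hence a \emph{fixed} finite quotient of $\Delta$ cannot carry the construction.

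Once $r$ is allowed to grow, the depth question becomes: can one bound the number of prime factors of $\abs{GL_n(\ZZ/s)}$?  For $s=p$ a single prime this is $O_n(p)=\prod_{i=0}^{n-1}(p^n-p^i)$, whose number of prime factors is not uniformly bounded over all primes $p$.  The paper handles this not by an elementary device but by invoking a theorem of Miech (a sieve-theoretic strengthening of Dirichlet's theorem) which produces infinitely many primes $p$ in a prescribed residue class for which $O_n(p)$ has at most $K$ prime factors.  One then takes $s=p_1p_2p_3$ with three such primes (three are needed so that for any Dress subgroup, with its two associated primes $q_0,q_1$, at least one $p_i$ avoids both) and $r = s\cdot\abs{GL_n(\ZZ/s)}$; the depth of $A_s \rtimes Q_r$ is then bounded by $3(n+1)+3K$ plus the fixed contribution from the holonomy $F$.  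Your suggested ``equidistribution on the finite torus'' does not address this obstruction, which is arithmetic rather than dynamical: the missing ingredient is precisely the analytic-number-theory input controlling the prime factorisation of $\abs{GL_n(\ZZ/p)}$.
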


The upshot of this discussion is that we will have to provide proofs for claims 
\ref{claim_FJC_Crystal.vcd} and \ref{claim_FJC_IrrSpecialAffine.vfga}
which avoid the use of the Farrell--Hsiang method.

\section{Transfer reducibility of crystallographic groups}\label{sec_crystal}

Let us fix the following notational conventions:
Recall that every crystallographic group $\Gamma$ fits into a short exact sequence
\begin{equation*}
 1 \to A \to \Gamma \xrightarrow{pr} F \to 1 
\end{equation*}
with $A$ a finitely generated, free abelian group
which equals its own centraliser, and $F$ a finite group.
Observe that there is always a canonical action of $F$ on $A$
by considering the conjugation actions of arbitrary lifts of elements of $F$
under $pr$.

Let $s \in \NN$. Then we denote by $A_s$ the quotient $A/sA$. Since $sA$ is also normal in $\Gamma$,
and we define $\Gamma_s$ to be the quotient $\Gamma/sA$. We let $\pi_s \colon \Gamma \twoheadrightarrow \Gamma_s$
be the projection map. Moreover, the epimorphism $pr$ induces a surjective homomorphism $pr_s \colon \Gamma_s \twoheadrightarrow F$
whose kernel is precisely $A_s$.

The normal subgroup $A$ is isomorphic to $\ZZ^n$, where $n$ is the rank of $\Gamma$. Consequently,
$A_s \cong \ZZ^n/s\ZZ^n \cong (\ZZ/s)^n$.
It follows that $\abs{\Gamma_s} = \abs{A_s} \cdot \abs{F} = s^n \cdot \abs{F}$.
Therefore, it suffices to bound the number of prime factors in $s$
(counted with their multiplicities) if we want to bound the depth of the finite quotient $\Gamma_s$.

We will make regular use of the following observation:

\begin{lemma}\label{lem_DressGroupsNF}
 Let $G$ be a finite group, and suppose there is a normal series $P' \unlhd H' \unlhd G$
 such that $P'$ is a $p$-group, $H'/P'$ is cyclic and $G/H'$ is a $q$-group, i.e.\ $G$ lies in $\dress$.
 
 Then there is a normal series $P \unlhd H \unlhd G$ such that $P$ is a $p$-group,
 $H/P$ is cyclic, $G/H$ is a $q$-group and neither $p$ nor $q$ divide the order
 of $H/P$. Moreover, $H$ is isomorphic to a semidirect product $P \rtimes H/P$,
 the subgroup $P$ is normal in $G$ and $G$ is $q$-hyperelementary mod $p$.
\end{lemma}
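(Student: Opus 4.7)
The plan is to sharpen the given normal series $P' \unlhd H' \unlhd G$ in two steps: first enlarging $P'$ to absorb the $p$-torsion in the cyclic middle factor $H'/P'$, and then shrinking $H'$ to strip away the $q$-torsion that still sits above. The case $p = q$ is trivial (then $G$ is itself a $p$-group and one may take $P = H = G$), so I would assume $p \neq q$ throughout. The recurring structural fact I will lean on is that every subgroup of a cyclic group is characteristic, and hence invariant under any group acting on that cyclic quotient by automorphisms.

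For the first step, I would let $P$ be the preimage in $H'$ of the unique Sylow $p$-subgroup of the cyclic group $H'/P'$. Then $P$ is a $p$-group (as an extension of the $p$-group $P'$ by a $p$-group), and by construction $|H'/P|$ is coprime to $p$, so $P$ is a Sylow $p$-subgroup of $H'$. Being normal in $H'$, it is the unique such Sylow, hence characteristic in $H'$; since $H' \unlhd G$, this forces $P \unlhd G$.

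For the second step, I would let $H$ be the preimage in $H'$ of the unique Hall $q'$-subgroup of the cyclic group $H'/P$. The conjugation action of $G$ on $H'/P$ is defined because $P \unlhd G$, and the Hall $q'$-subgroup is characteristic, so $H$ is again normal in $G$. By construction, $H/P$ is cyclic of order coprime to both $p$ and $q$, while $H'/H$ is a $q$-group; combined with $G/H'$ being a $q$-group, this shows $G/H$ is a $q$-group. Since $\gcd(|P|, |H/P|) = 1$, the Schur--Zassenhaus theorem supplies a complement of $P$ in $H$, yielding $H \cong P \rtimes H/P$. The series $P \unlhd H \unlhd G$ then exhibits $G/P$ as a cyclic-by-$q$ extension, i.e., as $q$-hyperelementary, so $G$ is $q$-hyperelementary mod $p$.

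I do not anticipate a real obstacle here: the only subtle point is to notice that $G$-normality can be propagated through the middle cyclic factor essentially for free, because every subgroup of a cyclic group is characteristic. Once this is observed, the proof reduces to elementary bookkeeping plus a single application of Schur--Zassenhaus to extract the semidirect product decomposition.
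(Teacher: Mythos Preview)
Your construction of $P$ and $H$ coincides with the paper's, but your justification for normality in $G$ is cleaner: you observe that $P$ is characteristic in $H'$ (as its unique Sylow $p$-subgroup) and that the Hall $q'$-subgroup of the cyclic group $H'/P$ is characteristic, whence both preimages are $G$-invariant since $H' \unlhd G$. The paper instead verifies $H \unlhd G$ and $P \unlhd G$ by ad hoc order arguments, assuming some conjugate $ghg^{-1}$ escapes the subgroup and deriving a divisibility contradiction from $\lvert ghg^{-1}\rvert = \lvert h\rvert$. Your route is shorter and more conceptual; the paper's buys nothing extra.

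There is one genuine slip. In the case $p = q$, the hypothesis does \emph{not} force $G$ to be a $p$-group: the cyclic middle factor $H'/P'$ may well have order divisible by primes other than $p$, so taking $P = H = G$ is illegitimate. Fortunately your main argument already covers this case unchanged. When $p = q$, the Hall $q'$-subgroup of $H'/P$ is all of $H'/P$ (its order is already coprime to $p = q$ after the first step), so your second step simply returns $H = H'$; this is exactly what the paper does when $p = q$. You should drop the special-case sentence and let the general construction run through.
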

\begin{proof}
 Let $\pi_1 \colon H' \to H'/P'$ be the projection. Since $H'/P'$ is cyclic,
 there is a unique cyclic $p$-Sylow group $S_p$ in $H'/P'$; so $H'/P'$
 decomposes as a direct product $H'/P' \cong S_p \times C'$. Set $P := \pi_1^{-1}(S_p)$.
 Then $P$ is a normal $p$-group in $H'$, and the quotient $H'/P$ is isomorphic to $C'$.
 
 If $p = q$, we set $H := H'$ and are done. Otherwise, let $\pi_2 \colon H' \to H'/P \cong C'$
 be the projection, and let $S_q$ be the unique $q$-Sylow group of $C'$.
 We have an isomorphism $C' \cong C \times S_q$. Set $H := \pi_2^{-1}(C)$.
 
 We claim that $H$ is normal in $G$. Let $h \in H$, $g \in G$, and suppose that $ghg^{-1} \notin H$.
 Since $H'$ is normal in $G$, we know that $ghg^{-1} \in H'$. The order of $ghg^{-1}$
 equals the order of $h$, so we conclude from $p \neq q$ that no power of $q$ divides
 the order of $ghg^{-1}$. Since we assumed that $ghg^{-1} \notin H$, this element
 maps to a non-trivial element in $H'/H \cong S_q$; but then there must be some
 power of $q$ which divides the order of $ghg^{-1}$, which is a contradiction.
 
 Set $Q := G/H$. The order of this group is given by
 \begin{equation*}
  \abs{Q} = \frac{\abs{G}}{\abs{H}} = \frac{\abs{H'}\cdot\abs{G/H'}}{\abs{H}}
   = \frac{\abs{P}\cdot\abs{C}\cdot\abs{S_q}\cdot\abs{G/H'}}{\abs{C}\cdot\abs{P}}
   = \abs{S_q}\cdot\abs{G/H'},
 \end{equation*}
 so $P \unlhd H \unlhd G$ is the desired normal series.
 
 The Schur--Zassenhaus Theorem \cite[Thm.~9.3.6]{MR896269} states that $H$ is a semidirect product.
 While this also implies that $P$ is normal in $G$, we prove normality by hand.
 Let $x \in P$ and $g \in G$, and suppose that $gxg^{-1} \notin P$. Since $H$ is normal,
 we know that $gxg^{-1} \in H$. So $gxg^{-1}$ defines a non-trivial element in $H/P$.
 In particular, there is some prime $l \neq p$ which divides the order of $gxg^{-1}$;
 this is a contradiction since $\abs{gxg^{-1}} = \abs{x}$ is a $p$-power.
 Moreover, the kernel of the natural surjection $G/P \twoheadrightarrow G/H$ is isomorphic to $H/P$,
 so $G/P$ is $q$-hyperelementary.
\end{proof}

\begin{lemma}[{cf.\ \cite[Lem.~3.8]{MR3164984}}]\label{lem_fhbdd.z2}
 The group $\ZZ^2 \rtimes_{-\id} \ZZ/2$ is a \fhbdd\ with respect to $\cV\cC yc$ (relative to an arbitrary finite generating set).
\end{lemma}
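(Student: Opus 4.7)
The plan is to adapt the original Farrell--Hsiang verification in \cite[Lem.~3.8]{MR3164984} to the stronger \fhbdd\ condition, which requires two modifications: replacing hyperelementary subgroups with Dress subgroups, and ensuring a uniform bound on the depth of the finite quotient.

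I take $N = 1$ and $B$ to be a small integer bound on the depth of $(\ZZ/p)^2 \rtimes \ZZ/2$ for $p$ prime, which is independent of $p$ since any properly decreasing chain of subgroups passes through orders dividing $2p^2$, $p^2$, $p$ and $1$. Given $\epsilon > 0$, I choose a sufficiently large prime $p = p(\epsilon, S)$ and set $\pi := \pi_p \colon G \twoheadrightarrow G_p = (\ZZ/p)^2 \rtimes \ZZ/2$. Since $p$ is prime and $|G_p/A_p| = 2$, every subgroup $D \leq G_p$ is Dress: setting $P := D \cap A_p$, the group $P$ is a normal $p$-subgroup of $D$ with $D/P \hookrightarrow \ZZ/2$, so the decomposition of Lemma~\ref{lem_DressGroupsNF} applies with $H = P$ and with $q = 2$ supplying the hyperelementary part.

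For each Dress $D \leq G_p$, set $\overline{D} := \pi^{-1}(D)$ and choose a $\overline{D}$-invariant rational line $\ell \subset \RR^2 = A \otimes \RR$: lift a non-zero vector from $P = D \cap A_p$ to $A$ when $\dim_{\mathbb{F}_p} P = 1$, and take any rational direction otherwise (the $\ZZ/2$-factor acts by $\pm \id$ and preserves every direction). Triangulate $\ell$ as the standard real line and let $\overline{D}$ act simplicially through the restriction of its affine action on $\RR^2$; the resulting $1$-dimensional $\overline{D}$-simplicial complex $E'_D$ has point stabilisers in $\{1, \ZZ/2, \ZZ, \ZZ \rtimes \ZZ/2\} \subset \cV\cC yc$. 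Then $E_D := G \times_{\overline{D}} E'_D$ is a $G$-simplicial complex of dimension $1$ with isotropy in $\cV\cC yc$, and $f_D$ will be assembled from a map $f'_D \colon G \to E'_D$ as in the proof of Theorem~\ref{thm_fhbddAreTR}.

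The main obstacle is to construct $f'_D$ so that it is simultaneously $\overline{D}$-equivariant and $\epsilon$-controlled in the $\ell^1$-metric; since the $\ell^1$-metric on a simplicial complex measures differences in barycentric weights and is insensitive to rescaling the target, a naive projection derived from the affine $\overline{D}$-action on $\RR^2$ yields a fixed, $p$-independent displacement for each generator $s \in S$. The classical Farrell--Hsiang remedy, which I will follow, is to choose coset representatives $\{r_i\}$ of $G/\overline{D}$ forming a combinatorial fundamental domain of extent proportional to $p$ along $\ell$, assign each $r_i$ a basepoint $p_i \in E'_D$ spaced proportionally to its position in the domain, and verify that for every $s \in S$ the $\ell^1$-displacement $d^{\ell^1}(p_i, \gamma_{i,s} p_{j(i,s)})$ -- where $\gamma_{i,s} := r_i s r_{j(i,s)}^{-1} \in \overline{D}$ encodes the $s$-induced change of coset -- decays like $1/p$, yielding $\epsilon$-control for $p$ taken large enough in terms of $S$ and $\epsilon$.
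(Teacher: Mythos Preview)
Your argument has a genuine gap at the choice of a single prime $p$. With $s = p$ the entire quotient $G_p = (\ZZ/p)^2 \rtimes_{-\id} \ZZ/2$ is itself a Dress group (take $P = H = (\ZZ/p)^2$; then $H/P = 1$ is cyclic and $G_p/H \cong \ZZ/2$). For $D = G_p$ one obtains $\overline{D} = G$, so the map $f_D \colon G \to E_D$ would have to be fully $G$-equivariant and $\epsilon$-contracting into a $G$-complex with virtually cyclic isotropy; equivalently, every generator $s \in S$ would have to move the single basepoint $f_D(1)$ by at most $\epsilon$. That is precisely the condition the Farrell--Hsiang mechanism is designed to circumvent, and your coset-representative construction degenerates here because $G/\overline{D}$ is a single point---there is no ``fundamental domain of extent proportional to $p$'' to exploit.

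A related problem: your claim that one can choose a $\overline{D}$-invariant line $\ell \subset \RR^2$ is never correct, since $\overline{D} \cap A$ always contains $pA = p\ZZ^2$, a rank-$2$ lattice which preserves no line under the translation action. The correct device (as in \cite[Lem.~3.7--3.8]{MR3164984}) is a linear \emph{projection} $r \colon \ZZ^2 \to \ZZ$ with $r(\overline{D} \cap A) \subset p\ZZ$ and Lipschitz constant $O(\sqrt{p})$; such an $r$ exists exactly when the image of $D \cap A_s$ in some $A_{p_i}$ is cyclic. The paper secures this by taking $s = p_1 p_2 p_3$ for three distinct odd primes: any Dress subgroup $D \leq \Gamma_s$ involves at most two primes $q_0,\, q_1$ in its normal series, so some $p_i \notin \{q_0, q_1\}$, and projecting $\Gamma_s \to \Gamma_{p_i}$ forces $\pi(D \cap A_s) \leq A_{p_i}$ to be cyclic. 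Only then does the scaling argument of \cite[Lem.~3.8]{MR3164984} go through.
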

\begin{proof}
 Set $\Gamma := \ZZ^2 \rtimes_{-\id} \ZZ/2$, and let
 \begin{equation*}
   1 \to \ZZ^2 \xrightarrow{i} \Gamma \xrightarrow{pr} \ZZ/2 \to 1
 \end{equation*}
 be the obvious short exact sequence. Let $d_\Gamma$ be the word metric
 with respect to some chosen finite generating set $S$ of $\Gamma$.
 The map $\text{ev} \colon \Gamma \to \RR^2$ which evaluates the natural $\Gamma$-action
 on $\RR^2$ at the point $0$ is a quasi-isometry, so we can find positive constants $C_1$ and $C_2$
 such that for all $\gamma_1, \gamma_2 \in \Gamma$
 \begin{equation*}
  d_{\text{euc}}(\text{ev}(\gamma_1),\text{ev}(\gamma_2)) \leq C_1 \cdot d_\Gamma(\gamma_1,\gamma_2) + C_2
 \end{equation*}
 holds. Let $\epsilon > 0$. Pick three pairwise distinct odd prime numbers $p_1$, $p_2$, $p_3$ which satisfy
 \begin{equation*}
  p_i \geq \frac{8 \cdot (C_1 + C_2)^2}{\epsilon^2}.
 \end{equation*}
 Set $s := p_1p_2p_3$, so $\Gamma_s := (\ZZ/s\ZZ)^2 \rtimes_{-\id} \ZZ/2$. This fits into an exact sequence
 \begin{equation*}
  1 \to (\ZZ/s)^2 \to \Gamma_s \xrightarrow{pr_s} \ZZ/2 \to 1.
 \end{equation*}
 Since $\abs{\Gamma_s} = 2 \cdot (p_1p_2p_3)^2$, the depth of $\Gamma_s$ is uniformly bounded.
 
 Let $D \leq \Gamma_s$ be a subgroup which lies in $\dress$. Then there is a normal series $Q_0 \unlhd D_0 \unlhd D$
 such that $Q_0$ is a $q_0$-group, $D/D_0$ is a $q_1$-group, and $D_0/Q_0$ is a cyclic group of order prime to $q_0$ and $q_1$.
 Assume without loss of generality that $p_3 \notin \{ q_0, q_1 \}$, and consider the projection $\pi \colon \Gamma_s \twoheadrightarrow \Gamma_{p_3}$.
 Then $\pi(D \cap (\ZZ/s)^2)$ is a cyclic subgroup of $(\ZZ/p_3)^2 \leq \Gamma_{p_3}$.
 
 If $\pi(D \cap (\ZZ/s)^2)$ is non-trivial, use \cite[Lem.~3.7]{MR3164984} to find a homomorphism $r \colon \ZZ^2 \to \ZZ$ such that the kernel
 of the mod $p_3$-reduction of $r$ equals $\pi(D \cap (\ZZ/s)^2)$, and $r_\RR = r \otimes_\ZZ \RR$ satisfies
 $d_{\text{euc}}(r_\RR(x_1),r_\RR(x_2)) \leq \sqrt{2p_3} \cdot d(x_1,x_2)$ for all $x_1,x_2 \in \RR$.
 Otherwise, let $r \colon \ZZ^2 \to \ZZ$ be the projection onto the first factor.
 
 Set $\overline{D} := \pi_s^{-1}(D)$. Then $r(\overline{D} \cap \ZZ^2) \subset p_3\ZZ$,
 and the argument proceeds precisely as in \cite[Lem.~3.8]{MR3164984} from here on.
\end{proof}

\begin{proposition}[{cf.\ \cite[Lem.~3.15]{MR3164984}}]\label{prop_fhbdd.crystalreducible}
 Let $\Gamma$ be a crystallographic group of rank $2$ which possesses a normal infinite cyclic subgroup.
 Let $\cF$ be the family of subgroups of $\Gamma$ which contains all virtually cyclic groups
 as well as all groups which do not surject onto $F$ under $pr$.
 
 Then $\Gamma$ is a \fhbdd\ with respect to $\cF$ (relative to an arbitrary finite generating set).
\end{proposition}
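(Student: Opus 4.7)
The plan is to mirror the proof of \cite[Lem.~3.15]{MR3164984} for the Farrell--Hsiang condition, but to adjust the choice of the finite quotient to have uniformly bounded depth, exactly in the spirit of the proof of Lemma \ref{lem_fhbdd.z2} above. The additional structural input provided by Lemma \ref{lem_DressGroupsNF} then handles the slightly larger class of Dress subgroups, while the normal infinite cyclic subgroup $C \unlhd \Gamma$ continues to play the pivotal role, as it allows us to produce transfer targets of dimension at most one.

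Concretely, given $\epsilon > 0$, I would pick three pairwise distinct primes $p_1, p_2, p_3$, each larger than the primes dividing $\abs{F}$ and each of size at least $c/\epsilon^2$ for a constant $c$ determined by the quasi-isometry between $\Gamma$ and the affine plane $\RR^2$ on which it acts, and then set $s := p_1 p_2 p_3$. The finite quotient $\Gamma_s$ has order $s^2 \cdot \abs{F}$, so its depth is bounded by a constant depending only on $\abs{F}$ and the rank $2$. For a Dress subgroup $D \leq \Gamma_s$, applying Lemma \ref{lem_DressGroupsNF} yields a normal series $P \unlhd H \unlhd D$ involving only two primes $p$ and $q$; pigeonholing the three chosen primes, one of the $p_j$ is distinct from both, and projecting further to $\Gamma_{p_j}$ reduces the analysis of $\pi(D \cap A_s)$ to a single $p$-subgroup of $A_{p_j} \cong (\ZZ/p_j)^2$, which is necessarily cyclic. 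From here, I would invoke \cite[Lem.~3.7]{MR3164984} to find a suitable homomorphism $r \colon A \to \ZZ$ and extend it, using the normal infinite cyclic subgroup of $\Gamma$, to a $\overline{D}$-equivariant map $f_D \colon \Gamma \to E_D$, where $E_D$ is a $1$-dimensional $\Gamma$-simplicial complex with virtually cyclic isotropy. The Dress subgroups $D$ whose preimages $\overline{D}$ fail to surject onto $F$ are absorbed by the family $\cF$, which by construction contains any such $\overline{D}$; one then uses an induced $\Gamma$-space as in standard Farrell--Hsiang constructions to produce $E_D$.

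The main obstacle is to carry out this strategy while keeping the dimension of $E_D$ uniformly bounded (at most one) and the $\epsilon$-control in effect. Both requirements force us to stay close to the original arguments of \cite[Lem.~3.8]{MR3164984} and \cite[Lem.~3.15]{MR3164984}; the additional bookkeeping needed to ensure bounded depth of $\Gamma_s$ is essentially the same as in the proof of Lemma \ref{lem_fhbdd.z2}. I expect the most delicate point to be the interplay between the holonomy action of $F$ on $A$ and the chosen projection $r$: the normal infinite cyclic subgroup $C$ gives a canonical one-dimensional $F$-invariant subspace of $A \otimes \RR$, and one must verify that $r$ can always be chosen to be compatible both with $C$ and with the cyclic image $\pi(D \cap A_s)$, while still yielding the Lipschitz estimate needed for the $\epsilon$-equivariance.
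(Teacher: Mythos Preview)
Your overall strategy---bound the depth of $\Gamma_s$ by choosing $s$ to be a product of a fixed number of large primes, then analyse Dress subgroups $D \leq \Gamma_s$ to obtain a cyclic image in some $A_p$---is the right one, and your handling of the case $pr_s(D) \neq F$ via the family $\cF$ matches the paper. However, the paper's proof differs from your proposal in two related respects, and the second one points to a genuine gap in what you wrote.

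First, the paper uses only \emph{two} odd primes $p_1, p_2$ (with $p_i \geq 2(C_1+C_2)/\epsilon$, not $c/\epsilon^2$) and a slightly finer case distinction on the Dress filtration $Q_0 \unlhd D_0 \unlhd D$: if $q_0$ is one of the chosen primes, projecting it away makes $\pi(D)$ hyperelementary and one invokes the argument on p.~352 of \cite{MR3164984}; otherwise $D \cap A_s$ is already $q_1$-hyperelementary and projecting away any $q_1$-part gives cyclicity directly. Your three-prime pigeonhole is a perfectly valid alternative for obtaining cyclicity, just slightly less economical.

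Second, and more importantly, the paper does \emph{not} use \cite[Lem.~3.7]{MR3164984} here. The crucial structural input from the hypothesis is that $A$ decomposes \emph{uniquely} as $Z_1 \oplus Z_2$ with both $Z_i$ infinite cyclic and $F$-invariant, and $F \in \{\ZZ/2, (\ZZ/2)^2\}$. Once $\pi(D) \cap A_p$ is cyclic (and automatically $F$-invariant since $pr_s(D) = F$), it is contained in $\pi_p(Z_j)$ for some $j$; then one uses the \emph{group homomorphism} $\hat\nu_{Z_j} \colon \Gamma \twoheadrightarrow \Delta_{Z_j} \in \{\ZZ, \ZZ \rtimes_{-1} \ZZ/2\}$ together with $\text{ev}_{Z_j} \colon \Delta_{Z_j} \to \RR$. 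Because $\hat\nu_{Z_j}$ is a genuine quotient map of $\Gamma$, the $\overline D$-equivariance of the transfer map comes for free, and the required Lipschitz constant is linear in $1/p$ rather than $1/\sqrt p$. By contrast, your plan to produce $r \colon A \to \ZZ$ via \cite[Lem.~3.7]{MR3164984} and then ``extend it, using the normal infinite cyclic subgroup'' runs into exactly the obstacle you flag at the end: there is no reason the retraction $r$ determined by the cyclic subgroup $\pi(D \cap A_s)$ is compatible with the $F$-action (which need not be $-\id$ here), so extending $r$ to a $\overline D$-equivariant map on $\Gamma$ is not straightforward. The $Z_1 \oplus Z_2$ decomposition is precisely what resolves this, and you should use it in place of \cite[Lem.~3.7]{MR3164984}.
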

\begin{proof}
 We will have to use the following facts which are proved at the beginning of the proof of Lemma 3.15
 in \cite{MR3164984}:
 \begin{itemize}
  \item $A$ decomposes uniquely into a direct sum $Z_1 \oplus Z_2$ of two infinite cyclic and $F$-invariant subgroups.
  \item $F$ is either $\ZZ/2$ or $\ZZ/2 \times \ZZ/2$.
 \end{itemize}
 If $Z$ is either $Z_1$ or $Z_2$, this is a normal subgroup of $\Gamma$ (since it is $F$-invariant).
 Let $\hat{\xi}_Z \colon \Gamma \twoheadrightarrow \Gamma/Z$ and $\xi_Z \colon A \twoheadrightarrow A/Z$
 be the projection maps. Furthermore, there is an epimorphism $\hat{\mu}_Z \colon \Gamma/Z \twoheadrightarrow \Delta_Z$
 onto $\Delta_Z \in \{ \ZZ, \ZZ \rtimes_{-\id} \ZZ/2 \}$ since $\Gamma/Z$ is virtually cyclic; the kernel of $\hat{\mu}_Z$ is finite,
 and the restriction $\mu_Z$ of $\hat{\mu}_Z$ to $A/Z$ is injective. Set
 \begin{equation*}
  \begin{split}
   \hat{\nu}_Z &:= \hat{\mu}_Z \circ \hat{\xi}_Z \colon \Gamma \twoheadrightarrow \Delta_Z, \\
   \nu_Z &:= \mu_Z \circ \xi_Z \colon A \to A_Z.
  \end{split}
 \end{equation*}
 Subject to some choice of finite generating set of $\Gamma$, there is a word metric $d_\Gamma$ on $\Gamma$.
 Let $\text{ev}_Z \colon \Delta_Z \to \RR$ be the map which is given by evaluating the natural $\Delta_Z$-action
 on $\RR$ at $0$. Equip $\RR$ with the simplicial structure whose set of vertices is $\{\frac{n}{2} \mid n \in \ZZ \}$.
 
 Then there are positive constants $C_1$ and $C_2$ such that for every $F$-invariant, infinite cyclic subgroup $Z$ of $A$
 and all $\gamma_1$, $\gamma_2 \in \Gamma$ we have
 \begin{equation*}
  d^{\ell^1}(\text{ev}_Z \circ \hat{\nu}_Z(\gamma_1),\text{ev}_Z \circ \hat{\nu}_Z(\gamma_2)) \leq C_1 \cdot d_\Gamma(\gamma_1,\gamma_2) + C_2.
 \end{equation*}
  Fix $\epsilon > 0$ and choose two odd prime numbers $p_1$ and $p_2$ such that
 \begin{equation*}
  p_i \geq \frac{2 \cdot (C_1 + C_2)}{\epsilon}
 \end{equation*}
 holds. Let $s := p_1p_2$. Consider a subgroup $D \leq \Gamma_s$ which lies in $\dress$, and assume without loss
 of generality that $pr_s(D) = F$. Pick a normal series $Q_0 \unlhd D_0 \unlhd D$ such that
 $Q_0$ is a $q_0$-group, $D/D_0$ is a $q_1$-group and $D_0/Q_0$ is a cyclic group of order prime to $q_0$ and $q_1$.
 
 If $q_0 \in \{ p_1, p_2 \}$, say $q_0 = p_2$, consider the projection $\pi \colon \Gamma_s \twoheadrightarrow \Gamma_{p_1}$.
 Then $\pi(D)$ is hyperelementary, and the argument on page 352 of \cite{MR3164984} shows that $\pi(D) \cap A_{p_1}$ is cyclic.
 
 If $q_0 \notin \{ p_1,p_2 \}$, the group $D \cap A_s$ is $q_1$-hyperelementary. In case $q_1$ is one of $p_1$ and $p_2$,
 let us say that $q_1 = p_1$. Otherwise, $D \cap A_s$ is even cyclic. In both cases, let $\pi \colon \Gamma_s \twoheadrightarrow \Gamma_{p_2}$
 be the projection. Then $\pi(D) \cap A_{p_2}$ is cyclic.
 
 In all cases we have considered, we have been able to find a prime $p \in \{ p_1,p_2 \}$ such that
 $\pi(D) \cap A_p$ is cyclic, where $\pi \colon \Gamma_s \twoheadrightarrow \Gamma_p$ is the canonical projection.
 Note that $\pi \circ \pi_s = \pi_p$. Set $\overline{D} := \pi_s^{-1}(D)$.
 It follows that there is $j \in \{1,2\}$ such that
 \begin{equation*}
  \begin{split}
   \overline{D} \cap A
   &\subset \pi_s^{-1}(\pi^{-1}(\pi(D))) \cap A \\
   &= \pi_p^{-1}(\pi(D) \cap A_p) \\
   &\subset \pi_p^{-1}(\pi_p(Z_j)).
  \end{split}
 \end{equation*}
 Hence, $\xi_{Z_j}(\overline{D} \cap A) \subset p(A/Z_j)$. The remainder of the proof is as in \cite[Lem.~3.15]{MR3164984}.
\end{proof}

\begin{proposition}\label{prop_fhbdd.crystal.vcd}
 Let $\Gamma$ be a crystallographic group of rank at least $2$ which does not contain a normal infinite cyclic subgroup.
 Let $\cF$ be the family of all subgroups of $\Gamma$ whose virtual cohomological dimension is smaller than that of $\Gamma$
 or which do not surject onto $F$ under $pr$.
 
 Then $\Gamma$ is a \fhbdd\ with respect to $\cF$ (relative to an arbitrary finite generating set).
\end{proposition}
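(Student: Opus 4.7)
The plan is to follow the blueprint of Proposition \ref{prop_fhbdd.crystalreducible}, combining the Chinese-remainder-style choice of modulus $s$ with Lemma \ref{lem_DressGroupsNF}'s analysis of the normal series of a Dress subgroup, but replacing the one-dimensional target $\RR$ of the transfer map by a suitable lower-dimensional affine $F$-space.

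First, I fix a finite symmetric generating set $S$ of $\Gamma$ and pick sufficiently many pairwise distinct odd primes $p_1, \dots, p_k$ coprime to $\abs{F}$, each chosen large in $1/\epsilon$ to secure the $\ell^1$-control at the end. Setting $s := p_1 \cdots p_k$, the epimorphism $\pi_s \colon \Gamma \twoheadrightarrow \Gamma_s$ has depth bounded by $nk + \depth(F)$, a quantity independent of $\epsilon$. For each Dress subgroup $D \leq \Gamma_s$ I may assume $pr_s(D) = F$, since otherwise $\overline{D} = \pi_s^{-1}(D)$ already lies in $\cF$. Applying Lemma \ref{lem_DressGroupsNF} to $D$ produces the primes $q_0, q_1$ associated with its refined normal series; provided $k$ exceeds the number of forbidden primes, some $p_j$ lies outside $\{q_0, q_1\}$. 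Pushing further to $\Gamma_{p_j}$ then forces, by essentially the argument on p.~352 of \cite{MR3164984}, the image of $D$ intersected with $A_{p_j}$ to be a cyclic $F$-invariant subgroup of $A_{p_j} \cong (\ZZ/p_j)^n$.

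The heart of the argument is to use the no-normal-$\ZZ$ assumption to construct an $F$-equivariant homomorphism $r \colon A \to A'$ onto a free abelian group $A'$ of rank $m$ strictly less than $n = \rank(A)$, such that the image of $\overline{D} \cap A$ in $A_{p_j}$ lies in $\ker(r \otimes \ZZ/p_j)$ and such that the extension $r_\RR \colon A \otimes \RR \to A' \otimes \RR$ satisfies a $\sqrt{p_j}$-Lipschitz estimate of the kind produced by \cite[Lem.~3.7]{MR3164984}. The hypothesis on $\Gamma$ forbids any $F$-invariant rank-one direct summand of $A$ on which $F$ acts by $\pm\id$, which is precisely the obstruction to finding such an $r$ for an arbitrary $F$-invariant cyclic subgroup of $A_{p_j}$; as only finitely many isomorphism types of such subgroups can arise (controlled by the representation theory of $F$ on $A \otimes \mathbb{Q}$), the construction is uniform in $D$. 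With $r$ chosen, I let $E_D := A' \otimes \RR$ equipped with the induced affine $\Gamma$-action and the standard $\Gamma$-equivariant simplicial structure of dimension $m \leq n - 1$, and set $f_D(\gamma) := \rho_D(\gamma) \cdot 0$, where $\rho_D$ denotes this affine action.

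The $\overline{D}$-equivariance of $f_D$ and its $\epsilon$-control on $S$-translates follow as at the end of the proof of Proposition \ref{prop_fhbdd.crystalreducible}, using the Lipschitz estimate for $r_\RR$ together with the lower bound on the $p_i$. For any point stabiliser $H$ of the $\Gamma$-action on $E_D$, the translation part $H \cap A$ is contained in $\ker(r)$ and therefore has rank at most $n - m < n$, so $\vcd(H) < \vcd(\Gamma)$ and $H \in \cF$. I expect the main obstacle to be the construction of the homomorphism $r$: this is the step that genuinely uses the absence of a normal infinite cyclic subgroup of $\Gamma$, and it requires a short case analysis of the possible $F$-invariant cyclic subgroups of $A_{p_j}$ in the spirit of p.~352 of \cite{MR3164984} in order to produce $r$ uniformly over $D$.
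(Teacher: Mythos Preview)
Your plan diverges from the paper's proof at the decisive step, and the divergence contains a genuine gap.

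The paper does \emph{not} try to accommodate a nontrivial $F$-invariant cyclic subgroup of $A_{p_j}$ by projecting to a lower-rank lattice. Instead it chooses the primes much more carefully: using Dirichlet's theorem one picks two primes $p_1,p_2$ with $p_i \equiv -1 \bmod 4l$ (where $\abs{F}=2^kl$), sets $r=\phi(\abs{F})$ and $s=p_1^rp_2^r$, so that $p_i^r\equiv 1\bmod\abs{F}$. With these congruences in place, Lemma~\ref{lem_dressgroupsinfinitequotients} together with \cite[Prop.~2.4.2]{MR2826431} forces $D\cap A_s$ to be \emph{trivial} for every Dress subgroup $D$ with $pr_s(D)=F$. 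One then finishes exactly as in \cite[Sec.~3.3]{MR3164984}, using the full $\RR^n$ as target; no lower-rank projection is needed.

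Your proposed construction of an $F$-equivariant $r\colon A\to A'$ with $0<\rank(A')<n$ and prescribed mod-$p_j$ kernel cannot work in general. Consider $F=\ZZ/3$ acting on $A=\ZZ^2$ via a matrix of order~$3$; this is a crystallographic group of rank~$2$ with no normal infinite cyclic subgroup, since $A\otimes\mathbb{Q}$ is an irreducible $F$-module. For any prime $p\equiv 1\bmod 3$ coprime to $\abs{F}$ the module $A_p$ splits into two $F$-invariant lines, so your scenario arises. But an $F$-equivariant map $A\to A'$ with $A'$ free of rank~$1$ would have to land in the trivial $F$-representation, hence factor through the coinvariants $A_F$, which here is finite; so the only such map is zero. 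Thus no $r$ with the properties you need exists, and the sentence ``The hypothesis on $\Gamma$ forbids any $F$-invariant rank-one direct summand of $A$ \dots\ which is precisely the obstruction to finding such an $r$'' has the logic inverted: the absence of such a summand is exactly what \emph{prevents} the projection, not what enables it. The Lipschitz estimate from \cite[Lem.~3.7]{MR3164984} you invoke is also specific to rank~$2$ and is not $F$-equivariant; it does not transfer to this situation.

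The fix is not to quotient but to rule the bad cyclic subgroup out from the start, and that requires both the congruence conditions on the $p_i$ and working with prime powers $p_i^r$ rather than a squarefree $s$.
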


An important ingredient in the proof of \ref{prop_fhbdd.crystal.vcd} is the following mild generalisation of the second part of \cite[Prop.~2.4.2]{MR2826431}.

\begin{lemma}\label{lem_dressgroupsinfinitequotients}
 Let $\Gamma$ be an arbitrary crystallographic group.
 Let $p_1$ and $p_2$ be prime numbers which do not divide $\abs{F}$,
 and let $r > 0$ be a natural number. Set $s := p_1^rp_2^r$.
 Suppose that $D \leq \Gamma_s$ is in $\dress$ such that $pr_s(D) = F$ and $D \cap A_s$ is non-trivial.
 
 Then there is $i \in \{ 1,2 \}$ such that $\pi(D) \cap A/p_i^rA$ contains a non-trivial $F$-invariant cyclic subgroup,
 where $\pi$ is the canonical projection $\Gamma_s \twoheadrightarrow \Gamma_{p_i^r}$.
\end{lemma}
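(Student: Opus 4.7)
The natural strategy is to combine the Dress normal form from Lemma \ref{lem_DressGroupsNF} with the Chinese Remainder decomposition of $A_s$. First I would apply Lemma \ref{lem_DressGroupsNF} to $D$ to obtain a normal series $P \unlhd H \unlhd D$ in which $P$ is a $p$-group normal in $D$, $H/P$ is cyclic of order coprime to both $p$ and $q$, and $D/H$ is a $q$-group. Intersecting with the normal subgroup $A_s$ of $\Gamma_s$ then yields a three-step $D$-invariant filtration $P \cap A_s \unlhd H \cap A_s \unlhd D \cap A_s$ whose successive subquotients are respectively a $p$-group, a cyclic group of order coprime to $pq$, and a $q$-group.

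The Chinese Remainder Theorem induces an $F$-equivariant decomposition $A_s \cong A/p_1^rA \oplus A/p_2^rA$ and hence $D \cap A_s = X_1 \oplus X_2$, where $X_i$ is the $p_i$-primary component; one checks that $X_i$ coincides with $\pi(D) \cap A/p_i^rA$, and each $X_i$ is $F$-invariant because $D \cap A_s$ is normal in $D$ with $pr_s(D) = F$. The hypothesis $p_1, p_2 \nmid \abs{F}$ supplies the key control: since $pr_s(P) \leq F$ is a $p$-subgroup, $p \in \{p_1, p_2\}$ forces $pr_s(P) = 1$ and hence $P \subseteq A_s$; symmetrically, $q \in \{p_1, p_2\}$ forces $pr_s(H) = F$. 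I would then split into cases based on $\{p, q\} \cap \{p_1, p_2\}$. In the generic case when this intersection is empty, both outer quotients of the Dress filtration vanish, so $D \cap A_s$ embeds into the cyclic group $H/P$; each $X_i$ is then a cyclic $F$-invariant summand, at least one of which is non-trivial by hypothesis. In the intermediate cases, whenever $p_i$ lies outside $\{p, q\}$, the $p_i$-primary part $X_i$ embeds into the cyclic middle subquotient $(H \cap A_s)/(P \cap A_s)$ and is therefore itself cyclic, yielding the required subgroup as soon as it is non-trivial.

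The hard part will be the sub-case in which the cyclic middle quotient degenerates on every non-trivial side, so that the non-trivial $X_i$ must be recovered from the $p$-primary piece $P \cap A_s$ or the $q$-primary piece $(D \cap A_s)/(H \cap A_s)$ alone. Here I plan to exploit that $p_1, p_2 \nmid \abs{F}$ makes each $X_i$ a semisimple module over $(\ZZ/p_i^r)[F]$ by a Maschke-style averaging argument, and to combine this with the $D$-normality of $P$ together with the precise shape of the Dress structure surviving the case analysis to locate a one-dimensional $F$-summand in the relevant component, which then furnishes the required non-trivial $F$-invariant cyclic subgroup.
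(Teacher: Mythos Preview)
Your overall setup—applying the Dress normal form of Lemma~\ref{lem_DressGroupsNF}, intersecting with $A_s$, and decomposing via the Chinese Remainder Theorem into primary components $X_1 \oplus X_2$—parallels the paper's proof, and your treatment of the cases in which $p_i \notin \{p,q\}$ (so that $X_i$ embeds into the cyclic middle quotient $(H\cap A_s)/(P\cap A_s)$) is correct and matches the paper's direct arguments in the corresponding sub-cases.

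The gap lies in your residual ``hard case''. Your plan is to invoke a Maschke-type semisimplicity argument to locate a one-dimensional $F$-summand in the relevant $X_i$. But semisimplicity alone does not produce one-dimensional summands: a simple $\mathbb{F}_{p_i}[F]$-module can have dimension greater than one (for instance, $(\ZZ/2)^2$ with the irreducible $\ZZ/3$-action is a simple two-dimensional $\mathbb{F}_2[\ZZ/3]$-module and contains no $F$-invariant cyclic subgroup). Thus the step ``locate a one-dimensional $F$-summand'' is not justified, and the vague appeal to ``the precise shape of the Dress structure'' does not close this gap. Note also that for $r>1$ the ring $(\ZZ/p_i^r)[F]$ is not semisimple, so even the averaging step needs more care than you indicate.

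The paper handles these cases by quite different mechanisms. When $p \in \{p_1,p_2\}$ (the paper's $q_0$), it projects to the complementary factor $\Gamma_{p_j^r}$, which annihilates the normal $p$-subgroup $P$; the image $\pi(D)$ is then genuinely hyperelementary, and the paper invokes \cite[Prop.~2.4.2]{MR2826431} for hyperelementary subgroups as a black box to obtain the required $F$-invariant cyclic subgroup. In the remaining delicate sub-case ($p \notin \{p_1,p_2\}$, $q \in \{p_1,p_2\}$, and $H \cap A_s$ trivial), the paper observes that $pr_s$ restricts to an isomorphism $H \cong F$ and that $D$ decomposes as an internal direct product $(D \cap A_s) \times H$; consequently the conjugation action of $F$ on $D \cap A_s$ is \emph{trivial}, so that every non-trivial cyclic subgroup is automatically $F$-invariant. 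Neither the reduction to the hyperelementary case (with its appeal to Quinn) nor the direct-product trivialisation of the $F$-action is present in your plan, and these are precisely the ingredients that replace the unavailable ``one-dimensional summand'' argument.
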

\begin{proof}
 Pick a normal series $Q_0 \unlhd D_0 \unlhd D$ such that $Q_0$ is a $q_0$-group, $D/D_0$ is a $q_1$-group,
 and $D_0/Q_0$ is a cyclic group whose order is coprime to $q_0$ and $q_1$.
 
 Suppose $q_0 \in \{ p_1, p_2 \}$, say $q_0 = p_1$. Consider the projection $\pi \colon \Gamma_s \twoheadrightarrow \Gamma_{p_1^r}$.
 Then $\pi(Q_0)$ is trivial, so $\pi(D)$ is a hyperelementary subgroup of $\Gamma_{p_1^r}$ which surjects with
 a non-trivial kernel onto $F$. Apply \cite[Prop.~2.4.2]{MR2826431} to obtain the claimed result.
 
 If $q_0 \notin \{ p_1,p_2 \}$, then $Q_0$ injects into $F$ via $pr_s$ and $D \cap A_s$ is $q_1$-hyperelementary.
 If also $q_1 \notin \{ p_1, p_2 \}$, then $D \cap A_s = D_0 \cap A_s$ is a non-trivial cyclic group.
 Let $p \in \{ p_1, p_2 \}$ be a divisor of the order of $D \cap A_s$, and let $\pi \colon \Gamma_s \twoheadrightarrow \Gamma_{p^r}$
 be the projection. Then $\pi(D) \cap A_{p^r}$ is a non-trivial cyclic group. Since it equals the kernel
 of the surjection $\pi(D) \twoheadrightarrow F$, it is also $F$-invariant.
 
 Now suppose $q_1 \in \{ p_1, p_2 \}$, say $q_1 = p_1$. Then $pr_s(D_0) = F$.
 
 In case $D_0 \cap A_s$ is non-trivial, the projection $\pi \colon \Gamma_s \twoheadrightarrow \Gamma_{p_2^r}$ maps
 $D \cap A_s$ to the non-trivial cyclic group $\pi(D_0 \cap A_s)$. This group is normal in $\pi(D) = \pi(D_0)$,
 and since $\pi(D_0)$ also surjects onto $F$, it is also $F$-invariant.
 
 In case $D_0 \cap A_s$ is trivial, $pr_s$ restricts to an isomorphism $D_0 \cong F$, and $D \cap A_s$ is a $q_1$-group
 isomorphic to $D/D_0$. It follows that $D \cong D_0 \times D/D_0$. The projection $\pi \colon \Gamma_s \twoheadrightarrow \Gamma_{p_1^r}$
 maps $D$ isomorphically to a subgroup of $\Gamma_{p_1^r}$; the kernel of the surjection $\pi(D) \twoheadrightarrow F$
 is a non-trivial $q_1$-group (isomorphic to $D/D_0$). In particular, there is a non-trivial, normal and cyclic subgroup of the kernel.
 This subgroup is then also normal in $\pi(D)$ because $\pi(D)$ is the direct product of the kernel and $F$;
 in particular, it is $F$-invariant. 
\end{proof}

\begin{proof}[Proof of Proposition \ref{prop_fhbdd.crystal.vcd}, cf.\ {\cite[Sec.~3.3]{MR3164984}}]
 Let $d_\Gamma$ be the word metric on $\Gamma$ with respect to some finite generating set $S$.
 Let $\text{ev} \colon \Gamma \to \RR^n$ be the map that is given by evaluating the natural $\Gamma$-action
 on $\RR^n$ at $0$. There are positive constants $C_1$ and $C_2$ such that
 \begin{equation*}
  d_{\text{euc}}(\text{ev}(\gamma_1),\text{ev}(\gamma_2)) \leq C_1 \cdot d_\Gamma(\gamma_1,\gamma_2) + C_2
 \end{equation*}
 for all $\gamma_1,\gamma_2 \in \Gamma$.
 
 Fix $\epsilon > 0$. Pick a simplicial structure on $\RR^n$ such that $\RR^n$ is a \gscplx{\Gamma}.
 Then there is $\delta > 0$ such that
 \begin{equation*}
  d_{\text{euc}}(x_1,x_2) \leq \delta \quad \Rightarrow \quad d^{\ell^1}(x_1,x_2) \leq \epsilon
 \end{equation*}
 for all $x_1, x_2 \in \RR^n$.
 
 Write $\abs{F} = 2^k \cdot l$ with $l$ a non-negative odd natural number and $k$ a non-negative natural number.
 Using Dirichlet's Theorem \cite[IV.4.1]{MR0344216}, pick two distinct prime numbers $p_1$ and $p_2$ such that
 \begin{equation*}
  \begin{split}
   p_i &\equiv -1 \text{ mod } 4l, \\
   p_i &\geq \frac{C_1 + C_2}{\delta}, \\
   p_i &\geq \abs{F}.
  \end{split}
 \end{equation*}
 In particular, $\abs{F}$ is coprime to both $p_1$ and $p_2$. Set $r := \phi(\abs{F})$, where $\phi$ is Euler's $\phi$-function. Then
 \begin{equation*}
  p_i^r \equiv 1 \text{ mod } \abs{F}.
 \end{equation*}
 Put $s := p_1^rp_2^r$. Note that the depth of $\Gamma_s$ can be uniformly bounded, independent of the choice of $p_1$ and $p_2$.
 
 Consider a subgroup $D \leq \Gamma_s$ which lies in $\dress$. We may assume that $pr_s(D) = F$.
 Since $\Gamma$ contains no normal infinite cyclic subgroup, it follows from \ref{lem_dressgroupsinfinitequotients}
 in combination with \cite[Prop.~2.4.2]{MR2826431} that $D \cap A_s$ is trivial.
 
 The proof continues as in \cite[Sec.~3.3]{MR3164984} from this point on to show that $\Gamma$ is a \fhbdd\ with respect to $\cF$.
\end{proof}

Propositions \ref{prop_fhbdd.crystalreducible} and \ref{prop_fhbdd.crystal.vcd} in conjunction with Lemma \ref{lem_fhbdd.z2}
prove Claim \ref{claim_FJC_Crystal.vcd} by virtue of the Transitivity Principle. As was explained in \S \ref{sec_overview},
Claims \ref{claim_FJCCrystal} and \ref{claim_FJCvfga} follow from this.

\section{Transfer reducibility of irreducible special affine groups}\label{sec_affine}

We shall now deal with Claim \ref{claim_FJC_IrrSpecialAffine.vfga}.

\begin{theorem}\label{thm_fhbdd.IrrSpecialAffine}
 Every irreducible special affine group $\Gamma$ is a \fhbdd\ with respect to the family of virtually finitely generated abelian groups.
\end{theorem}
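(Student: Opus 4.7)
The plan is to adapt the arguments of \cite[Sec.~4]{MR3164984} to the \fhbdd\ setting, in the same spirit that Propositions \ref{prop_fhbdd.crystalreducible} and \ref{prop_fhbdd.crystal.vcd} handle the crystallographic case. Fix a finite generating set $S$ of $\Gamma$ and the associated word metric $d_\Gamma$. Since the restriction of $\rho'$ to $\Theta$ is cocompact, isometric and proper, the evaluation map $\text{ev}\colon \Gamma \to \RR^n$, $\gamma \mapsto \rho'(\gamma)(0)$, satisfies a linear bound $d_{\text{euc}}(\text{ev}(\gamma_1),\text{ev}(\gamma_2)) \leq C_1 d_\Gamma(\gamma_1,\gamma_2) + C_2$ with constants depending only on $S$.

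The finite quotient $\pi\colon \Gamma \twoheadrightarrow F$ will be produced by combining two reductions. First, the translation subgroup $A \leq \Theta$ (which is finitely generated free abelian and normal in $\Gamma$) is reduced modulo $sA$ for $s = p_1^r p_2^r$, where $p_1,p_2$ are two distinct primes chosen sufficiently large (so that $\frac{C_1+C_2}{p_i} \leq \delta$ for a discretisation scale $\delta$ depending on $\epsilon$) and subject to congruences with respect to the holonomy of $\Theta$ as in the proof of Proposition \ref{prop_fhbdd.crystal.vcd}. Second, the infinite cyclic or infinite dihedral quotient $\Delta$ is reduced to a finite cyclic or dihedral quotient of order bounded by a function of $s$. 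Since the number of prime factors in the order of the resulting $F$ is uniformly bounded, the depth of $F$ stays uniformly bounded as well.

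Given a Dress subgroup $D \leq F$ which one may assume surjects onto the finite image of the holonomy, the key step is to find a projection to a further quotient so that $\overline{D} := \pi^{-1}(D)$ either lies in $\cF$ automatically, or its image in $A$ is controlled by a proper $F$-invariant sublattice. This is achieved by an analog of Lemma \ref{lem_dressgroupsinfinitequotients} applied in combination with the irreducibility hypothesis: any further quotient of $\Gamma$ onto a virtually finitely generated abelian group has $\vcd \leq 1$, which restricts which Dress structures can survive the reductions, and forces the surviving translation-lattice component to be at most one-dimensional modulo a controlled direction. The target $G$-simplicial complex $E_D$ is then taken to be the product of an appropriate simplicial structure on $\RR$ (on which $\Delta$ acts via its cyclic or dihedral quotient) with the simplicial quotient of the invariant sublattice direction, equipped with a $\Gamma$-action factoring through a virtually abelian group; in particular, all isotropy groups lie in the family of virtually finitely generated abelian groups. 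The map $f_D\colon \Gamma \to E_D$ is obtained by composing $\text{ev}$ with the projection to this factor, and the $\epsilon$-equivariance follows from the Lipschitz estimate together with the largeness of $p_1,p_2$.

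The main obstacle is the case analysis required to handle all the possible normal series witnessing membership of $D$ in $\dress$, and to arrange the choice of prime $p \in \{p_1,p_2\}$ (and of auxiliary projection) so that the resulting target space simultaneously has dimension bounded independently of $\epsilon$, has isotropy confined to virtually finitely generated abelian subgroups, and admits a map satisfying the approximate equivariance requirement uniformly over all $D$. This is exactly the point where the irreducibility of $\Gamma$ is indispensable: without it, the codimension-one quotient argument would be obstructed by larger abelian quotients, and the isotropy of $E_D$ could fall outside $\cF$.
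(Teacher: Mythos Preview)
Your sketch misses the decisive new ingredient and therefore has a genuine gap. The whole point of strengthening the Farrell--Hsiang condition to the ``bounded depth'' version is that the finite quotients $F$ must have \emph{uniformly} bounded depth, independent of $\epsilon$. You assert this in one sentence: ``the number of prime factors in the order of the resulting $F$ is uniformly bounded''. But this is precisely the non-trivial step. In the irreducible special affine case the finite quotient has the form $A_s \rtimes_{\rho_{r,s}} Q_r$, and for the semidirect product to be defined one needs $r$ to be a multiple of the order of the reduced monodromy matrix in $GL_n(\ZZ/s)$; in effect $r$ must absorb a divisor of $\lvert GL_n(\ZZ/s)\rvert = \prod_i O_n(p_i)$. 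For generic primes $p_i$ the number of prime factors of $O_n(p_i)$ is completely uncontrolled, so letting the $p_i$ grow with $\epsilon$ will make the depth of $F$ blow up. The paper's proof handles this via Lemma~\ref{lem_IrrSpecialAffine.finitequotients}, whose core is Miech's theorem (Theorem~\ref{thm_miech}): there exist infinitely many primes $p$ in a prescribed arithmetic progression for which $O_n(p)$ has a bounded number of prime factors. Choosing the $p_i$ from this set is what yields the uniform bound $B$. Nothing in your outline substitutes for this.

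Two secondary points also indicate that the case analysis would not go through as written. First, you use two primes $p_1,p_2$; the paper needs three (see the proof of Lemma~\ref{lem_IrrSpecialAffine.finitequotients}), because a Dress group carries a normal series involving two primes $q_0,q_1$, and one must be able to project away from both simultaneously. Second, the analogue of Lemma~\ref{lem_dressgroupsinfinitequotients} you invoke is tailored to crystallographic quotients $\Gamma_s$; the special affine quotients $(\ZZ/s)^n \rtimes_{M_s} \ZZ/r$ have a rather different structure (the cyclic part is large and depends on $s$), and the dichotomy one must establish is different: either $G \cap (\ZZ/s)^n \subset \nu'(\ZZ/s)^n$ for a suitable $\nu'$, or $[\ZZ/r : \pi(G)] \geq \nu$. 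Once Lemma~\ref{lem_IrrSpecialAffine.finitequotients} is in place, the remainder of the argument is literally the proof of \cite[Lem.~4.22 and Prop.~4.41]{MR3164984}, not a new construction of target complexes.
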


The main technical ingredient for the proof of Theorem \ref{thm_fhbdd.IrrSpecialAffine}
is the following generalisation of \cite[Sec.~4.4]{MR3164984}:

\begin{lemma}\label{lem_IrrSpecialAffine.finitequotients}
 There is a natural number $B$ such that for all natural numbers $o, \nu$ there are $r,s \in \NN$
 such that $s \equiv 1 \text{ mod } o$ and for all $M \in GL_n(\ZZ)$ the following holds:
 \begin{enumerate}
  \item The order of $GL_n(\ZZ/s)$ divides $r$; in particular, the semidirect product $(\ZZ/s)^n \rtimes_{M_s} \ZZ/r$
   is defined, where $M_s$ denotes the reduction of $M$ modulo $s$. Let $\pi \colon (\ZZ/s)^n \rtimes_{M_s} \ZZ/r \to \ZZ/r$
   be the projection.
  \item The order of $(\ZZ/s)^n \rtimes_{M_s} \ZZ/r$ contains at most $B$ prime factors, counted with their multiplicities.
  \item All subgroups $G \leq (\ZZ/s)^n \rtimes_{M_s} \ZZ/r$ which lie in $\dress$ have one of the following properties:
   \begin{enumerate}
    \item There is some $\nu' \geq \nu$ which divides $s$ such that $\nu' \equiv 1 \text{ mod } o$ and $G \cap (\ZZ/s)^n \subset \nu'(\ZZ/s)^n$.
    \item $[\ZZ/r \colon \pi(G)] \geq \nu$.
   \end{enumerate}
 \end{enumerate}
\end{lemma}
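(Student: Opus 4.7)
The plan is to adapt the argument from \cite[Sec.~4.4]{MR3164984} (which treats the hyperelementary case of a similar finite-quotient lemma) to the broader Dress setting, using the refined structure of Dress groups provided by Lemma~\ref{lem_DressGroupsNF}. Fix a constant $k = k(n)$ to be determined. Using Dirichlet's theorem on primes in arithmetic progressions, select $k$ distinct primes $p_1, \dots, p_k$ satisfying $p_i \equiv 1 \pmod{o}$ and $p_i \geq \nu$, and set $s := p_1 \cdots p_k$, so that automatically $s \equiv 1 \pmod{o}$. Take $r := \abs{GL_n(\ZZ/s)}$. Since $\abs{GL_n(\ZZ/p_i)} = p_i^{\binom{n}{2}} \prod_{j=1}^n (p_i^j - 1)$ has prime factors only among $\{p_i\} \cup \{\text{prime divisors of } p_i^j - 1 : 1 \leq j \leq n\}$, the total number of prime factors of $\abs{(\ZZ/s)^n \rtimes \ZZ/r}$, counted with multiplicity, is bounded by a constant $B$ depending only on $n$ and $k$, hence independent of $o$ and $\nu$. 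This yields properties (1) and (2).

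For (3), let $G \leq (\ZZ/s)^n \rtimes_{M_s} \ZZ/r$ be a Dress subgroup. Apply Lemma~\ref{lem_DressGroupsNF} to obtain a normal series $P \unlhd H \unlhd G$ with $P$ normal in all of $G$, $H \cong P \rtimes C$ with $C$ cyclic of order coprime to both $p$ and $q$, and $G/H$ a $q$-group. Set $K := G \cap (\ZZ/s)^n$ and decompose $K = \bigoplus_{i=1}^k K_i$ with $K_i \leq (\ZZ/p_i)^n$ via the Chinese Remainder Theorem. For each $i$ with $p_i \notin \{p,q\}$, the subgroup $K_i$ intersects $P$ trivially (being a $p_i$-group with $p_i \neq p$) and maps trivially to the $q$-group $G/H$; hence it embeds into the cyclic group $H/P$. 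As $K_i$ is elementary abelian over $\ZZ/p_i$ when regarded inside $(\ZZ/p_i)^n$, it must be either $0$ or a line.

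If some index $i_0$ satisfies $K_{i_0} = 0$, then $\nu' := p_{i_0}$ is a divisor of $s$ with $\nu' \geq \nu$, $\nu' \equiv 1 \pmod{o}$, and $K \subset \nu'(\ZZ/s)^n$, giving condition (a). Otherwise, $K_i \neq 0$ for all $i$, so at least $k - 2$ of the $K_i$ are $\pi(G)$-invariant lines in $(\ZZ/p_i)^n$. Each such line determines a character $\chi_i \colon \pi(G) \to (\ZZ/p_i)^*$ given by the eigenvalue of $M_{p_i}^{\pi(g)}$ on $K_i$. Choosing $k$ large enough, I want to argue that the existence of these $k-2$ simultaneous invariant lines forces $[\ZZ/r : \pi(G)] \geq \nu$, yielding condition (b).

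The main obstacle is the last step: quantifying, \emph{uniformly} in $M \in GL_n(\ZZ)$, how the simultaneous preservation of many invariant lines shrinks $\pi(G)$. Because $r$ and $s$ must be selected without knowledge of $M$, the estimate has to be derived from generic properties of the characteristic polynomial of $M$ reduced modulo the various $p_i$, and this is where the bulk of the technical work lies. I expect to follow closely the counting scheme from \cite[Sec.~4.4]{MR3164984} for the hyperelementary case, with the extra generality of the Dress family absorbed into the argument via the refined normal series coming from Lemma~\ref{lem_DressGroupsNF}; the calibration of $k$ and, if necessary, the modification of $r$ should then emerge from the resulting count.
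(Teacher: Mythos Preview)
Your argument for property~(2) contains a genuine error. You claim that because $\abs{GL_n(\ZZ/p_i)} = p_i^{\binom{n}{2}}\prod_{j=1}^n(p_i^j-1)$, the total number of prime factors of $\abs{(\ZZ/s)^n \rtimes \ZZ/r}$ counted with multiplicity is bounded by a constant depending only on $n$ and $k$. This is false: the number of prime factors of $p_i^j-1$ (with multiplicity) is not bounded independently of $p_i$. Already for $n=1$ one has $\abs{GL_1(\ZZ/p)}=p-1$, and $\Omega(p-1)$ is unbounded as $p$ runs over primes in any Dirichlet progression. Consequently, your choice of $s$ and $r$ gives no uniform bound $B$, and without such a bound the lemma fails for its intended application (bounded depth of the finite quotient). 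The paper resolves this by invoking a theorem of Miech \cite{MR0179139}, an analytic generalisation of Dirichlet's theorem asserting that for any integer polynomial $f$ there exist infinitely many primes $p\equiv 1\pmod o$ for which the number of prime factors of $f(p)$ is bounded by a constant $K$. Applying this to $O_n(X)=\prod_{j=0}^{n-1}(X^n-X^j)$ furnishes a set $\cP$ of admissible primes; one then takes $s=p_1p_2p_3$ with $p_i\in\cP$, $p_i>\nu$, and $r=\abs{GL_n(\ZZ/s)}\cdot s$, obtaining $B=3(n+1)+3K$.

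Your approach to property~(3) is also incomplete at the key point. The paper does not argue via simultaneous invariant lines and does not need a large number $k$ of primes; three suffice. Instead it first analyses subgroups $H$ that are cyclic mod $p$: lifting a generator of the cyclic quotient, it produces an explicit element $x\in H\cap(\ZZ/s)^n$ whose order, combined with a divisibility computation using the extra factor of $s$ in $r$ (note $r=\abs{GL_n(\ZZ/s)}\cdot s$, not $\abs{GL_n(\ZZ/s)}$), forces some $p_i\neq p$ to divide $[\ZZ/r:\pi(H)]$ unless $H\cap(\ZZ/s)^n=P\cap(\ZZ/s)^n$. One then passes from $H$ to the full Dress group $G$ by observing that both $[G\cap(\ZZ/s)^n:H\cap(\ZZ/s)^n]$ and $[\pi(G):\pi(H)]$ are $q$-powers, so the prime $p_i$ survives. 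In the remaining case $\abs{G\cap(\ZZ/s)^n}=p^kq^l$, and choosing $p_i\notin\{p,q\}$ gives $G\cap(\ZZ/s)^n\subset p_i(\ZZ/s)^n$ directly. Your invariant-line strategy, by contrast, leaves the crucial implication ``many invariant lines $\Rightarrow$ large index of $\pi(G)$'' unproved, and it is not clear how to make it uniform in $M\in GL_n(\ZZ)$.
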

\begin{proof}
 Let us first explain how to obtain the bound on the number of prime factors in the order of $(\ZZ/s)^n \rtimes_{M_s} \ZZ/r$.
 Suppose $s$ is a product of pairwise distinct prime numbers $s  = p_1 \dots p_k$. Then $\ZZ/s \cong \ZZ/p_1 \times \dots \ZZ/p_k$
 as rings. Since the diagram
 \begin{equation*}
  \begin{tikzpicture}
   \matrix (m) [matrix of math nodes, column sep=1.5em, row sep=1.5em,text depth=.5em, text height=1em]
   {M_n(\ZZ/s) & M_n(\ZZ/p_1) \times \dots \times M_n(\ZZ/p_k) \\
   \ZZ/s & \ZZ/p_1 \times \dots \times \ZZ/p_k \\};
   \path[->]
   (m-1-1) edge node[above]{$\cong$} (m-1-2) edge node[left]{$\text{det}_s$} (m-2-1)
   (m-1-2) edge node[right]{$\text{det}_{p_1} \times \dots \times \text{det}_{p_k}$} (m-2-2)
   (m-2-1) edge node[above]{$\cong$} (m-2-2);
  \end{tikzpicture}
 \end{equation*}
 commutes and $(\ZZ/p_1 \times \dots \ZZ/p_k)^* = (\ZZ/p_1)^* \times \dots \times (\ZZ/p_k)^*$,
 the top isomorphism induces an isomorphism
 \begin{equation*}
  GL_n(\ZZ/s) \xrightarrow{\cong} GL_n(\ZZ/p_1) \times \dots \times GL_n(\ZZ/p_k).
 \end{equation*}
 Define a polynomial $O_n(X) \in \ZZ[X]$ by
 \begin{equation*}
  O_n(X) := (X^n - 1)(X^n - X) \dots (X^n - X^{n-1}).
 \end{equation*}
 Then $O_n(p)$ is the order of $GL_n(\ZZ/p)$ for any prime $p$, and consequently we have
 \begin{equation*}
  \abs{GL_n(\ZZ/s)} = O_n(p_1) \cdot \dots \cdot O_n(p_k).
 \end{equation*}
 In order to bound the number of prime factors in this expression, we rely on the following generalisation of Dirichlet's Theorem:
 \begin{theorem}[{\cite{MR0179139}}]\label{thm_miech}
  Let $f(X) \in \ZZ[X]$ be a polynomial. Let $\rho$ and $\mu$ be natural numbers with $(\rho,\mu) = 1$.
  Then there is a constant $K > 0$ such that there are infinitely many primes $p$ with the property that
  $p \equiv \rho \text{ mod } \mu$ and the number of prime factors of $f(p)$, counted with their multiplicities,
  is bounded by $K$.
 \end{theorem}
 Applying this theorem to $O_n(X)$ with $\rho = 1$ and $\mu = o$, we obtain a positive number $K$ and an infinite set of primes $\cP$
 such that for all $p \in \cP$, we have that $p \equiv 1 \text{ mod } o$ and $O_n(p)$ has at most $K$ prime factors,
 counted with their multiplicities.
 
 Let us now pick three distinct prime numbers $p_1$, $p_2$ and $p_3$ from $\cP$ such that each of them is greater than $\nu$.
 Set $s := p_1p_2p_3$ and $r := \abs{GL_n(\ZZ/s)} \cdot s$. Then $s \equiv 1 \text{ mod } o$ and the order of $M_s$ clearly divides $r$.
 Since
 \begin{equation*}
  \abs{(\ZZ/s)^n \rtimes_{M_s} \ZZ/r} = s^n \cdot s \cdot \abs{GL_n(\ZZ/s)} =  (p_1p_2p_3)^{n+1} \cdot \abs{GL_n(\ZZ/s)},
 \end{equation*}
 our preliminary considerations apply to show that the order of $(\ZZ/s)^n \rtimes_{M_s} \ZZ/r$ contains at most
 $B := 3(n+1) + 3K$ prime factors (counted with their multiplicities).
 
 What we have to check is that every subgroup $G \leq (\ZZ/s)^n \rtimes_{M_s} \ZZ/r$ which lies in $\dress$
 has the desired properties. The proof is a direct adaptation of the arguments in \cite[Lem.\ 4.18 -- 4.21]{MR3164984}.
 
 Fix a generator $t$ of $\ZZ/r$. Then every element of $(\ZZ/s)^n \rtimes_{M_s} \ZZ/r$ can be written in the form $vt^j$
 for some $v \in (\ZZ/s)^n$ and some $j \in \NN$.
 
 Let us first consider the case of a subgroup $H \leq (\ZZ/s)^n \rtimes_{M_s} \ZZ/r$ which is cyclic mod $p$ for some prime $p$.
 Choose an extension $1 \to P \to H \to C \to 1$ such that $C$ is cyclic, $P$ is a $p$-group, and $p$ does not divide the order of $C$.
 Let $c \in C$ be a generator, and pick a preimage $vt^j$ under the epimorphism $H \to C$. Since $p$ does not divide $\abs{C}$,
 the element $d := c^{\abs{P}}$ is another generator of $C$, and $\pi((vt^j)^{\abs{P}}) = d$. Write $(vt^j)^{\abs{P}} = wt^l$.
 Set $x := (wt^l)^{[\pi(H) \colon \pi(P)]}$.
 
 Suppose that $H \cap (\ZZ/s)^n \neq P \cap (\ZZ/s)^n$. By definition, $x$ lies in the kernel of $\pi\big|_H$, which is $H \cap (\ZZ/s)^n$.
 Its image in $C$ is $d^{[\pi(H) \colon \pi(P)]} \neq 0$, so $x$ does not lie in $P \cap (\ZZ/s)^n$.
 Let $s'$ be the order of $x$. Then $s'$ divides $\abs{C}$; in particular, $p$ does not divide $s'$.
 Note that $s'$ divides $s$; write $s = \sigma \cdot s'$. By the definition of $s$, the numbers
 $\sigma$ and $s'$ are coprime. Let $k := \frac{\abs{GL_n(\ZZ/s)}}{\gcd(\abs{GL_n(\ZZ/s)},s')}$.
 Then $k\sigma$ and $s'$ are still coprime. Consequently, $(wt^l)^{k\sigma[\pi(H) \colon \pi(P)]} = x^{k\sigma} \neq 0$.
 On the other hand, one can compute as in \cite[Lem.\ 4.19]{MR3164984} that $(wt^l)^{sr'}$ is the trivial element,
 so $sr'$ does not divide $k\sigma[\pi(H) \colon \pi(P)]$.
 Dividing by $k\sigma$ on both sides, we get $s'\overline{r} \nmid [\pi(H) \colon \pi(P)]$,
 where $\overline{r}$ is some natural number containing only prime factors which are also prime factors of $s'$.
 Therefore, there is some $i \in \{ 1,2,3 \}$ and a natural number $N \geq 1$ such that $p_i \mid s'$, $p_i^N \mid s'\overline{r}$, and
 $p_i^N \nmid [\pi(H) \colon \pi(P)]$. Since $s'$ is a divisor of $\abs{H}$, the prime $p_i$ divides $\abs{H}$.
 
 It follows that $r = sr'$ is divisible by $p_i^N$. As $p$ does not divide $s'$, we must have $p \neq p_i$.
 In particular, $[\ZZ/r \colon \pi(P)]$ is also divisble by $p_i^N$.
 Since $[\pi(H) \colon \pi(P)]$ is only divisible by primes which are also prime factors of $\abs{C}$,
 the equality
 \begin{equation*}
  [\ZZ/r \colon \pi(H)] \cdot [\pi(H) \colon \pi(P)] = [\ZZ/r \colon \pi(P)]
 \end{equation*}
 implies that $p_i$ divides $[\ZZ/r \colon \pi(H)]$; in particular, $[\ZZ/r \colon \pi(H)] \geq p_i \geq \nu$.
 
 Thus, we have shown that for every subgroup $H$ which is cyclic mod $p$ for some prime $p$,
 there is an extension $1 \to P \to H \to C \to 1$ with $C$ a cyclic group, $P$ a $p$-group
 such that $p \nmid \abs{C}$, and one of the following statements is true:
 \begin{itemize}
  \item $H \cap (\ZZ/s)^n = P \cap (\ZZ/s)^n$.
  \item There is $i \in \{ 1,2,3 \}$ such that $p \neq p_i$, $p_i \mid \abs{H}$ and $p_i \mid [\ZZ/r \colon \pi(H)]$.
 \end{itemize}
 We are now going to use this to show the actual claim. So let $G \in \dress$ be a subgroup of $(\ZZ/s)^n \rtimes_{M_s} \ZZ/r$.
 Pick an extension $1 \to H \to G \to Q \to 1$ such that $H$ is cyclic mod $p$ for some prime $p$ and $Q$ is a $q$-group.
 If $p \neq q$, we may assume that $q$ does not divide $\abs{H}$.
 Note that both $[G \cap (\ZZ/s)^n \colon H \cap (\ZZ/s)^n]$ and $[\pi(G) \colon \pi(H)]$ are $q$-powers.
 Choose an extension $1 \to P \to H \to C \to 1$ with the properties we had just discussed.
 
 Assume first that $H \cap (\ZZ/s)^n = P \cap (\ZZ/s)^n$. Then $\abs{G \cap (\ZZ/s)^n} = p^kq^l$
 for some natural numbers $k$ and $l$. Choose $i$ such that $p \neq p_i \neq q$.
 Let $\gamma$ be a generator of $\ZZ/s$, and let $(g_1,\dots,g_n) \in G \cap (\ZZ/s)^n$ be
 an arbitrary element. There are natural numbers $a_j$ such that $g_j = \gamma^{a_j}$.
 It follows that $\gamma^{a_jp^kq^l} = 0$, so $a_j$ is divisible by $p_i$.
 This shows that $G \cap (\ZZ/s)^n \subset p_i(\ZZ/s)^n$. Note that by our initial choice of $p_i \in \cP$,
 it is automatically true that $p_i \geq \nu$, $p_i$ divides $s$, and $p_i \equiv 1 \text{ mod } o$.
 
 Consider now the case that $H \cap (\ZZ/s)^n \neq P \cap (\ZZ/s)^n$, so there is some $i$
 such that $p \neq p_i$, $p_i \mid \abs{H}$ and $p_i \mid [\ZZ/r \colon \pi(H)]$.
 We must also have $q \neq p_i$. Since
 \begin{equation*}
  [\ZZ/r \colon \pi(G)] \cdot [\pi(G) \colon \pi(H)] = [\ZZ/r \colon \pi(H)],
 \end{equation*}
 the prime $p_i$ must divide $[\ZZ/r \colon \pi(G)]$, so $[\ZZ/r \colon \pi(G)] \geq p_i \geq \nu$.
 This finishes the proof.
\end{proof}

\begin{proof}[Proof of Theorem \ref{thm_fhbdd.IrrSpecialAffine}] 
 We have to introduce some additional notation. Recall that we can write $\Gamma$
 as an extension $1 \to \Theta \to \Gamma \to \Delta \to 1$ of a crystallographic
 group $\Theta$ by $\Delta \in \{ \ZZ, D_\infty \}$.
 Let $A$ be the unique normal, free abelian subgroup of $\Theta$ which equals its own centraliser.
 Set $Q := \Gamma/A$. As before, set $A_s := A/sA$ for any positive integer $s$.
 Note that $A_s$ is isomorphic to $(\ZZ/s)^n$. Moreover, the virtually cyclic group $Q$
 has a normal, infinite cyclic subgroup $C \leq Q$. Let $F$ be the finite quotient $Q/C$, and denote
 for any positive integer $r$ the quotient $Q/rC$ by $Q_r$. As described in \cite[p.~359]{MR3164984},
 there is a certain semidirect product $A_s \rtimes_{\rho_{r,s}} Q_r$ whenever $r$ divides the order of
 $\text{aut}(A_s)$, and one can construct a projection map $\pi_{r,s} \colon \Gamma \to A_s \rtimes_{\rho_{r,s}} Q_r$.
 If $G \leq A_s \rtimes_{\rho_{r,s}} Q_r$ is any subgroup, write $\overline{G}$ for the preimage $\alpha_{r,s}^{-1}(\overline{G})$.
 
 First, we observe that given $\tau \in \NN$, there are natural numbers $r, s \in \NN$ such that
 \begin{enumerate}
  \item $s \equiv 1 \text{ mod } \abs{H^2(Q;A)}$.
  \item $\abs{\text{aut}(A_s)} \mid r$.
  \item For every subgroup $G \leq A_s \rtimes_{\rho_{r,s}} Q_r$ which lies in $\dress$, one of the following holds:
   \begin{itemize}
    \item The order of $H^1(pr(\overline{G});A)$ and $H^2(pr(\overline{G});A)$ is finite
     and there is $k \in \NN$ such that $k$ divides $s$, $k \geq \tau$, $k \equiv 1 \text{ mod } \abs{H^1(pr(\overline{G});A)}$,
     $k \equiv 1 \text{ mod } \abs{H^2(Q;A)}$, $k \equiv 1 \text{ mod } \abs{H^2(pr(\overline{G});A)}$, and $\overline{G} \cap A \subset kA$.
    \item $[D \colon \pi(pr(\overline{G}))] \geq \tau$.
   \end{itemize}
 \end{enumerate}
 This is non-trivial, but one can copy the proof of \cite[Lem.~4.22]{MR3164984} verbatim, noticing that the only part of the proof
 which is specific to hyperelementary subgroups is the invocation of \cite[Prop.~4.10]{MR3164984}, which we can replace by Lemma \ref{lem_IrrSpecialAffine.finitequotients}.
 Observe that
 \begin{equation*}
  \abs{A_s \rtimes_{\rho_{r,s}} Q_r} = \abs{A_s} \cdot \abs{Q_r} = \abs{(\ZZ/s)^n} \cdot \abs{\ZZ/r} \cdot \abs{F}.
 \end{equation*}
 Since we used Lemma \ref{lem_IrrSpecialAffine.finitequotients} to choose $r$ and $s$, we know that $\abs{(\ZZ/s)^n} \cdot \abs{\ZZ/r}$ contains at most $B$ prime factors.
 Moreover, the order of $F = Q/C$ does not depend on any of the choices we made, and thus always contains the same number of prime factors.
 In total, this gives us a uniform bound on the number of prime factors occurring in the order of $\abs{A_s \rtimes_{\rho_{r,s}} Q_r}$,
 counted with their multiplicites, and thus on the depth of $A_s \rtimes_{\rho_{r,s}} Q_r$.
 
 Now the proof can be finished by arguing precisely as in the proof of \cite[Prop.~4.41]{MR3164984}.
\end{proof}

\appendix
\section{On the proof of Oliver's theorem}\label{sec_oliversthm}
\renewcommand{\thesection}{A}

As promised, we are now going to review the proof of Oliver's Theorem \ref{thm_OliversTheorem} to show the existence
of the function $\bound$. The outline of the proof is basically that of \cite{MR0375361}, with some additional
input from \cite{MR520510}. However, we will deviate from the treatment in \cite{MR0375361} at the end to get a better grip
on the dimension bound.

Let $G$ be a finite group throughout. Let $\Omega(G)$ be the Burnside ring of $G$.
We think about elements in $\Omega(G)$ as equivalence classes of finite $G$-CW-complexes,
where the relevant equivalence relation $\sim_\chi$ is the following:
Two finite $G$-CW-complexes $X$ and $Y$ are $\chi$-equivalent, $X \sim_\chi Y$, if and only if
$\chi(X^H) = \chi(Y^H)$ for all subgroups $H \leq G$, where $\chi$ denotes the Euler characteristic
of a finite CW-complex. The disjoint union and product operations induce the ring structure on $\Omega(G)$.
See \cite[p.~90]{MR0423390} and \cite{MR0394711} for more information on this description of $\Omega(G)$.

Let $\Delta(G) \subset \Omega(G)$ be the subset given by
\begin{equation*}
 \begin{split}
  \Delta(G) := \{ x \in \Omega(G) \mid &\text{ There is a finite contractible $G$-CW-complex $X$} \\
   &\qquad\text{with $x = [X] - 1$.} \}.
 \end{split}
\end{equation*}
As Oliver observed in \cite[p.~90]{MR0423390}, this is an ideal in $\Omega(G)$.
Let $\mathsf{gh}_G \colon \Omega(G) \to \ZZ$ be the ``ghost map'' that sends $[X]$ to $\chi(X^G)$.
Then the image of $\Delta(G)$ under $\mathsf{gh}_G$ is an ideal in $\ZZ$;
we let $n_G$ denote the unique non-negative generator.
One easily observes that $n_G = 1$ if there is a finite contractible $G$-CW-complex without a global fixed point.

Another important concept is that of a resolving function:

\begin{definition}[{\cite[bottom of p.~159]{MR0375361}}]
 Let $\cS(G)$ denote the poset of subgroups of $G$. A function $\phi \colon \cS(G) \to \ZZ$
 is a \emph{resolving function} if the following holds:
 \begin{itemize}
  \item $\phi$ is constant on conjugacy classes of subgroups.
  \item For all $H \leq G$, the order of the Weyl group $[N_G(H) \colon H]$ divides $\phi(H)$.
  \item If $H \in \cyc_p$ for some prime $p$, then $\sum_{K \supset H} \phi(K) = 0$.
 \end{itemize}
\end{definition}

Every finite, contractible $G$-CW-complex $X$ gives rise to a resolving function $\phi_X$ \cite[Prop.~2 \& Lem.~2]{MR0375361}.
The set
\begin{equation*}
 \{ \phi(G) \mid \phi \text{ is a resolving function for $G$} \} \subset \ZZ
\end{equation*}
forms a subgroup, and we let $r_G$ denote the unique non-negative generator of this group.
If $[X] - 1$ is a preimage of $n_G$ with respect to $\mathsf{gh}_G$, then $n_G = \chi(X^G) - 1 = \phi_X(G)$.
It follows that $r_G$ is always a divisor of $n_G$.

\begin{theorem}[{cf.\ \cite[Thm.~2]{MR0375361}}]\label{thm_OliversTheorem.prelim}
 If $r_G = 1$ and $G$ is not a $p$-group for any prime $p$, then there is a finite, contractible $G$-CW-complex $X$
 without global fixed point whose dimension is bounded by $4 \cdot \depth(G) + 2$. 
\end{theorem}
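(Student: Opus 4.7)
The plan is to follow the strategy of \cite[Thm.~2]{MR0375361}, refining the dimension accounting using input from \cite{MR520510}.

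First, using $r_G = 1$, I fix a resolving function $\phi \colon \cS(G) \to \ZZ$ with $\phi(G) = 1$. The desired complex $X$ will be assembled inductively, with the target fixed-point Euler characteristics dictated by $\phi$; in particular, $\phi(G) = 1$ will force $\chi(X^G) = 0$ at the end, and the construction is arranged so that $X^G$ is in fact empty rather than merely acyclic. The hypothesis that $G$ is not a $p$-group enters precisely here, since Smith theory obstructs the empty fixed-point set whenever $G$ is a $p$-group.

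Next, I stratify the support of $\phi$ by depth: the conjugacy classes of relevant subgroups are grouped into antichains $\cS_1, \cS_2, \dots, \cS_m$ such that every chain of subgroups of $G$ meets each $\cS_i$ at most once. Since a properly descending chain in $G$ has length at most $\depth(G)$, we may take $m \leq \depth(G)$. Then $X$ is built in $m$ stages, where at stage $i$ I attach equivariant cells of the form $G/H \times D^{k}$ (with $H$ ranging over $\cS_i$) to the previous skeleton. The attaching maps are chosen both to correct the Euler characteristics of fixed-point subcomplexes for subgroups in $\cS_i$ and to kill the relative homotopy groups needed to maintain the contractibility program.

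Third, I verify that each stage increases the total dimension by at most $4$. This is the key numerical claim and the place where I expect the proof to be most delicate: in \cite{MR0375361} the cell-attachment step is phrased in terms of $\chi$-equivalence, which does not immediately yield a geometric representative of bounded dimension. The sharper obstruction-theoretic version of this step from \cite{MR520510} supplies explicit attaching maps in a bounded range of dimensions and makes the constant $4$ visible. Combining this with a base complex of dimension at most $2$ yields the bound $4 \cdot \depth(G) + 2$.

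The main obstacle is exactly this per-stage dimension accounting: one must simultaneously kill relative homotopy, adjust Euler characteristics on all fixed-point sets labelled by the current antichain, and ensure that the new cells do not disturb the fixed-point configurations already achieved at earlier stages. Coordinating these three requirements within a $4$-dimensional budget per stage is what forces the reliance on the refined tools of \cite{MR520510} over the more abstract $\chi$-equivalence arguments of \cite{MR0375361}.
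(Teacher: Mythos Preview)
Your outline diverges from the paper's proof at the decisive point, and the gap is exactly where you yourself flag the argument as ``most delicate''.

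The paper does \emph{not} build the contractible complex in a single stagewise sweep with a $4$-per-stage budget. Instead it proceeds in two phases. First one constructs, by induction over subgroups ordered by a rank function, a \emph{$G$-resolution} $Y$: a finite $G$-CW-complex of dimension $n \leq 2\depth(G)$ which is $(n-1)$-connected, has $Y^G = \emptyset$, and whose only nonvanishing homology $H_n(Y;\ZZ)$ is a finitely generated \emph{projective} $\ZZ[G]$-module. This step costs roughly $2$ dimensions per rank level, not $4$. Second, one takes the join $X' := Y * Y$. The join doubles the dimension to $2n+1$, and via K\"unneth the top homology of $X'$ becomes $H_n(Y;\ZZ) \otimes_\ZZ H_n(Y;\ZZ)$, which is a \emph{stably free} $\ZZ[G]$-module (this is where the tensor trick, cf.\ \cite[Prop.~C.3]{MR1961198}, enters). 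Only now can one attach free $(2n+1)$- and $(2n+2)$-cells to kill the remaining homology and obtain a contractible $X$ of dimension $\leq 4\depth(G)+2$.

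Your scheme omits the join entirely, and this is a genuine gap. After your stagewise construction you will end up with something like the $G$-resolution $Y$: highly connected, fixed-point Euler characteristics correct, but with top homology a projective $\ZZ[G]$-module $P$ which represents a class in $\tilde{K}_0(\ZZ[G])$ that need not vanish. No amount of attaching cells of type $G/H \times D^k$ in a bounded range of dimensions will kill $P$ unless $P$ is already stably free; this is an algebraic obstruction, not a dimensional one. The appeal to \cite{MR520510} for ``explicit attaching maps in a bounded range of dimensions'' cannot resolve this, because the obstruction-theoretic machinery there still requires the relevant module to be stably free before one can realise the nullhomotopy geometrically. The join is precisely the device that converts ``projective'' into ``stably free'' at the cost of doubling the dimension, and this is what makes the constant $4$ appear.

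Two minor points: the correct normalisation is $\phi(G) = -1$ (so that $\chi(X^G) = 1 + \phi(G) = 0$), not $\phi(G) = 1$; and your stratification by antichains is equivalent to the paper's stratification by the rank function on $\cS(G)$, so that part is fine.
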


As a first step towards Theorem \ref{thm_OliversTheorem.prelim}, one constructs a \emph{$G$-resolution} $Y$,
i.e., a finite, $n$-dimensional and $(n-1)$-connected $G$-CW-complex $Y$ with $Y^G = \emptyset$
such that $H_n(Y;\ZZ)$ is a finitely generated projective $\ZZ[G]$-module.
In addition, we will see that the dimension $n$ of $Y$ can be bounded by $2 \cdot \depth(G)$.

To keep track of how the construction proceeds, we try to make the induction as explicit as possible.
Let $\cS(G)$ be again the poset (with respect to $\supseteq$) of subgroups of $G$.
Define the \emph{rank} of a subgroup $H \leq G$ to be
\begin{equation*}
 \rank(H) := \max \{ \rank(K) \mid K \in \cS(G), K \supsetneq H \} + 1,
\end{equation*}
where we let $\max \emptyset = 0$. Note that the depth of $G$ is precisely $\rank(\{1_G\})$,
and that conjugate subgroups have equal rank.
For each $r \leq \depth(G)$, choose a linear order $\preccurlyeq_r$ on the set of subgroups of rank $r$.
Then
\begin{equation*}
 H \preccurlyeq H' :\Longleftrightarrow \rank(H) < \rank(H') \text{ or } (\rank(H) = \rank(H') \text{ and } H \preccurlyeq_{\rank(H)} H')
\end{equation*}
defines a linear order on $\cS(G)$. We write $H \prec H'$ if $H \preccurlyeq H'$ and $H \neq H'$.

The construction of $Y$ proceeds inductively along the finite linear order $(\cS(G) \setminus \{1\}, \prec)$.
By assumption, we may choose a resolving function $\phi$ on $G$ with $\phi(G) = -1$.
Then we claim that for every $H \in \cS(G) \setminus \{ 1 \}$, there is a finite $G$-CW-complex $Y(H)$
such that the following holds:
\begin{enumerate}
 \item The complex $Y(H)$ contains only cells of type $G/K$ for $K \neq G$ and $K \preccurlyeq H$;
  in particular, it has no global fixed point.
 \item The dimension of $Y(H)$ is bounded by $2 \cdot \rank(H)$.
 \item For every $K \preccurlyeq H$, the dimension of the $K$-fixed point set $Y(H)^K$ is bounded by $2 \cdot \rank(K)$.
 \item\label{cond_eulerchar} $\chi(Y(H)^K) = 1 + \sum_{K' \supseteq K} \phi(K')$ for all $K \preccurlyeq H$.
 \item If $K \preccurlyeq H$ is a $p$-group for some prime $p$, the $K$-fixed points $Y(H)^K$ are $\ZZ/p$-acyclic.
\end{enumerate}
For the start of the induction, we can set $Y(G) := \emptyset$. Suppose that $Y(H^-)$ has been constructed for some subgroup $H^- \leq G$.
Let $H$ be the immediate successor of $H^-$. If there is some conjugate $gHg^{-1}$ of $H$ such that $gHg^{-1} \prec H$,
we may set $Y(H) := Y(H^-)$. Suppose that no conjugate of $H$ is $\prec$-smaller than $H$.
Then there is some $n \in \ZZ$ such that $\chi(Y(H^-)^H) + n \cdot [N_G(H) \colon H] = 1 + \sum_{K \supset H} \phi(K)$.

Assume first that $H$ is no $p$-group. If $n = 0$, set $Y(H) := Y(H^-)$. Otherwise, we can add cells of type $G/H$
in sufficiently low dimensions to enforce condition \ref{cond_eulerchar} without affecting the other properties.

So suppose now that $H$ is a $p$-group for some prime $p$. Attach successively cells of type $G/H$ to $Y(H^-)$
to construct a $G$-CW-complex $Y'$ whose $H$-fixed point set $(Y')^H$ has dimension bigger than the $K$-fixed point sets
$(Y')^K$ for all $K \supsetneq H$ and is $(\dim(Y')-1)$-connected. We can arrange that $\dim(Y') \leq 2 \cdot \rank(H) - 1$.
By the argument on p.~162 of \cite{MR0375361}, the top-dimensional homology $H_{\dim(Y')}(Y;\ZZ/p)$ is free.
This allows us to glue on a set of $(\dim(Y')+1)$-cells of type $G/H$ to obtain a finite $G$-CW-complex $Y(H)$
whose dimension is bounded by $2 \cdot \rank(H)$ and which is $\ZZ/p$-acyclic. One checks that $Y(H)$ has
all other desired properties.

At the end of the induction, we have a finite $G$-CW-complex $Y_0$ which has no global fixed point, whose dimension $n'$
is bounded by $2 \cdot \depth(G) - 2$, whose fixed-point sets under non-trivial $p$-groups are $\ZZ/p$-acyclic and
which satisfies $\chi(Y_0^H) = 1 + \sum_{K \supseteq H} \phi(K)$ for all $H \neq 1$.

By another induction along the skeleta, we can glue on free $G$-cells to produce an $(n'+1)$-dimensional and $n'$-connected $G$-CW-complex
$Y$ which has no global fixed point and whose top-dimensional homology $H_{n'+1}(Y;\ZZ)$ is finitely generated and projective
as a $\ZZ[G]$-module (see \cite[Proof of Thm.~2]{MR0375361} for the last claim). Setting $n := n' + 1$, we have found a $G$-resolution.

Theorem \ref{thm_OliversTheorem.prelim} can be derived from the existence of a $G$-resolution $Y$ as follows: Take the join $X' := Y * Y$.
We will think about the join of two spaces $Z$ and $Z'$ as $(C(Z) \times Z') \cup_{Z \times Z'} (Z \times C(Z')$
via the homeomorphism
\begin{equation*}
 \begin{split}
  Z * Z' &\to (C(Z) \times Z') \cup_{Z \times Z'} (Z \times C(Z') \\
  (z,t,z') &\mapsto \begin{cases}
                     (z,(1-2t,z')) \in Z \times C(Z') & t \leq \frac{1}{2}, \\
                     ((z,2t-1),z') \in C(Z') \times Z & t \geq \frac{1}{2}.
                    \end{cases}
 \end{split}
\end{equation*}
This description makes it obvious that $X'$ is a $G$-CW-complex without a global fixed point whose dimension can be bounded by
$2n + 1 \leq 4 \cdot \depth(G) + 1$. Moreover, we can use the given decomposition to apply the Seifert--van Kampen Theorem,
and then proceed by induction with the Hurewicz Theorem and Mayer--Vietoris sequence for homology to show that $X'$ is $2n$-connected.
The isomorphisms
\begin{equation*}
 H_{2n+1}(X';\ZZ) \xrightarrow{\cong} H_{2n}(Y \times Y;\ZZ) \xleftarrow{\cong} H_n(X';\ZZ) \otimes_\ZZ H_n(X';\ZZ),
\end{equation*}
which can be assembled from the Mayer--Vietoris sequence and the K\"unneth Theorem, are both $\ZZ[G]$-linear.
We can now invoke \cite[Prop.~C.3]{MR1961198} to deduce that $H_{2n+1}(X',\ZZ)$ is a stably free $\ZZ[G]$-module.
Hence, we can add free $(2n+1)$- and $(2n+2)$-cells to produce a finite, contractible $G$-CW-complex $X$
without global fixed point whose dimension is bounded by $4 \cdot \depth(G) + 2$.

\begin{corollary}
 Suppose $G$ is not a $p$-group for any prime $p$. Then the following are equivalent:
 \begin{enumerate}
  \item $r_G = 1$.
  \item There is a finite, contractible $G$-CW-complex $X$ with $X^G = \emptyset$ whose dimension is bounded by $4 \cdot \depth(G) + 2$.
  \item $n_G = 1$.
 \end{enumerate}
\end{corollary}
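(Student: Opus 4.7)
The plan is to prove the cycle of implications (1) $\Rightarrow$ (2) $\Rightarrow$ (3) $\Rightarrow$ (1); each step will be short given the material already built up in this appendix. The implication (1) $\Rightarrow$ (2) is precisely Theorem \ref{thm_OliversTheorem.prelim}, whose proof was just carried out by first constructing a $G$-resolution $Y$ of dimension at most $2 \cdot \depth(G)$ along the linear order on $\cS(G) \setminus \{1\}$, and then forming $Y * Y$ and attaching a pair of free $G$-cells in the top two dimensions to kill the stably free top homology module.

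For (2) $\Rightarrow$ (3) I would simply apply the ghost map. The assumed complex $X$ witnesses $[X] - 1 \in \Delta(G)$ by the very definition of $\Delta(G)$, and
$\mathsf{gh}_G([X] - 1) = \chi(X^G) - \chi(\mathrm{pt}^G) = 0 - 1 = -1$. Hence the ideal $\mathsf{gh}_G(\Delta(G)) \subseteq \ZZ$ contains $-1$ and therefore equals all of $\ZZ$, so its non-negative generator is $n_G = 1$.

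For (3) $\Rightarrow$ (1) I would invoke the divisibility relation $r_G \mid n_G$ already recorded above: any $[X] - 1 \in \Delta(G)$ with $\mathsf{gh}_G([X] - 1) = n_G$ gives rise to a resolving function $\phi_X$ with $\phi_X(G) = \chi(X^G) - 1 = n_G$, so $n_G$ lies in the subgroup $r_G \cdot \ZZ \subseteq \ZZ$. If $n_G = 1$ then $r_G \in \{0,1\}$; the value $r_G = 0$ is excluded because it would force the image subgroup, and hence $n_G$, to vanish. I do not anticipate any genuine obstacle inside the corollary itself: the hypothesis that $G$ is not a $p$-group is consumed entirely by Theorem \ref{thm_OliversTheorem.prelim} in the step (1) $\Rightarrow$ (2), and the remaining two implications are formal manipulations of the definitions of $\Delta(G)$, $\mathsf{gh}_G$, $n_G$ and $r_G$.
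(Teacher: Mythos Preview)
Your proof is correct and follows exactly the route the paper intends: the corollary is stated without its own proof because each implication is already set up in the preceding paragraphs---(1)$\Rightarrow$(2) is Theorem~\ref{thm_OliversTheorem.prelim}, (2)$\Rightarrow$(3) is the observation that a fixed-point free finite contractible complex forces $n_G=1$, and (3)$\Rightarrow$(1) is the divisibility $r_G \mid n_G$. Your write-up simply makes these steps explicit, including the harmless check that $r_G=0$ is impossible when $n_G=1$.
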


Since Oliver has shown in \cite[Thm.~5]{MR0375361} that $r_G = 1$ if and only if $G \notin \dress$, Theorem \ref{thm_OliversTheorem} follows
from the well-known fact that every finite $G$-CW-complex is $G$-homotopy equivalent to a finite \gscplx{G} of
equal dimension, see e.g.~\cite[Prop.~A.4]{MR1961198}.

\bibliographystyle{alpha}
\bibliography{../Literatur}

\end{document}